\theoremstyle{plain}
\newtheorem{theorem}{Theorem}[section]
\newtheorem{lemma}[theorem]{Lemma}
\newtheorem{corollary}[theorem]{Corollary}
\theoremstyle{definition}
\newtheorem{definition}{Definition}[section]
\numberwithin{equation}{section}
\newcommand{\conj}[1]{\overline{#1}}
\DeclareMathOperator{\sgn}{sgn}
\DeclareMathOperator{\Rp}{Re}
\begin{document}

\title{Bergman-H\"{o}lder Functions, Area Integral Means and Extremal Problems}
\author{Timothy Ferguson}
\address{Department of Mathematics\\University of Alabama\\Tuscaloosa, AL}
\email{tjferguson1@ua.edu}

\date{\today}

\begin{abstract}
We study certain weighted area integral means of analytic functions in the 
unit disc. We relate the growth of these means to the property of being
mean H\"{o}lder continuous 
with respect to the 
Bergman space norm.  In contrast with earlier work, we use the 
second iterated difference quotient instead of the first.  
We then 
give applications to Bergman space extremal problems.
\end{abstract}

\maketitle

This paper deals with mean H\"{o}lder type 
smoothness conditions for functions in 
Bergman spaces on the unit disc $\mathbb{D}$, 
and the relation of these conditions to extremal problems 
in Bergman spaces. 

The first main topic is area integral means and smoothness conditions. 
It is well known (due to Hardy and Littlewood) 
that $f$ is analytic in the unit disc and 
$|f'(re^{i\theta})| \leq C(1-r)^{-1+\beta}$ for $0 < \beta \leq 1$ if and only if 
$f$ is continuous in the closed unit disc and 
$|f(e^{i\theta + it}) - f(e^{i\theta})| \leq C'|t|^\beta$. 
This result can be thought of as dealing with the 
$H^\infty$ norm of the boundary function.  
There is a similar result for $1 \leq p < \infty$, 
also due to Hardy and Littlewood, that states that for an analytic 
function $f$, the integral means
$M_p(r,f') \leq C(1-r)^{-1+\beta}$ if and only if $f \in H^p$ and 
$\|f(\cdot) - f(e^{it}\cdot)\|_{H^p} \leq C'|t|^{\beta}$.  
(See chapter 5 of \cite{D_Hp}). 

Zygmund \cite{Zygmund_Smooth-Functions} obtained similar results for the 
second iterated difference.  For example, he proved that for an 
analytic function $f$, 
one has that $f$ is continuous in $|z| \leq 1$ and 
$|f(e^{it}z) + f(e^{-it}z) - 2f(z)| \leq C|t|$ if and only if 
$|f''(z)| \leq C(1-r)^{-1}$ (see \cite{D_Hp}). 
Similar results hold for powers of $|t|$ greater than $0$ and at most 
$2$, and for 
integral means.  Analogous properties hold for harmonic functions in 
higher dimensions (see e.g.\ Chapter V of \cite{Stein_Sing-Int-Diff}). 

In \cite{GSS_Mean-Lipschitz}, the authors give results relating growth 
of area integral means of analytic functions to mean H\"{o}lder regularity 
of these functions.  In this article, we prove similar results, but 
instead use the second iterated difference, like in the result of 
Zygmund.  Also, we work on the standard weighted Bergman spaces instead 
of just the unweighted case.  For example, we prove that if 
$0 < \beta \leq 2$ and $-1 < \alpha < \infty$, and if 
$A^p_\alpha$ denotes the standard weighted Bergman space, then  
$\|f(e^{it}\cdot) + f(e^{-it}\cdot) - 2f(\cdot)\|_{A^p_\alpha} \leq 
C |t|^{\beta}$ if and only if the weighted area integral means of 
$f$ are $O((1-r)^{\beta-2})$.  
We let $\Lambda^*_{\beta, A^p_\alpha}$ denote the space of all such $f$. 
The advantage of working with the second 
iterated difference is that it can be used to characterize higher regularity 
than the difference $f(z) - f(e^{it}z)$.  

Related to this, we prove various results about the growth of 
weighted area integral means of analytic functions 
and how this relates to the 
growth of area integral means of integrals and derivatives of analytic 
functions.  We also relate growth of area integral means to growth of 
classical integral means.

The second main topic is the relation of smoothness conditions to 
extremal functions.  In \cite{Khavinson_McCarthy_Shapiro}, the 
authors give a result about mean smoothness of the solution of an
extremal problem for 
Bergman spaces.  Their work is based on 
\cite{Shapiro_Regularity-closest-approx}, where a similar result is 
given for Hardy spaces.  
Actually, many techniques in these papers that are relevant to this 
paper 
are very general and only use the fact that the spaces in question are closed 
subspaces of $L^p$ that are invariant under translations of the form 
$z \mapsto ze^{it}$.

We derive a result similar to the one in \cite{Khavinson_McCarthy_Shapiro} 
for another type of extremal problem in 
weighted Bergman spaces.  In particular, given a 
$k \in  A^q_\alpha$, for $1 < q < \infty$, the extremal problem in question is 
to find
$F \in A^p_\alpha$ such that $\|F\| = 1$ and 
$\Rp \int_{\mathbb{D}} f \overline{k} \, dA_\alpha$ is as large 
as possible, where $1/p + 1/q = 1$. 
Because of the uniform convexity of 
$L^p(dA_\alpha),$ such an $F$ always exists. 

Several results are known that allow one to deduce 
regularity properties of $F$ from regularity properties of $k$, and 
vice-versa.  See for example \cite{Ryabykh, tjf1, tjf2, tjf:pnoteven1}.  
Our result is of this type and says that if $0 < \beta \leq 2$ and 
$\|k(e^{it}\cdot) + k(e^{-it} \cdot) - 2k(\cdot)\|_{A^q_\alpha} 
\leq C|t|^{\beta}$ then 
$\|F(e^{it}\cdot) + F(e^{-it} \cdot) - 2F(\cdot)\|_{A^p_\alpha} 
\leq C'|t|^{\beta/p}$ for $p > 2$ and 
$\|F(e^{it}\cdot) + F(e^{-it} \cdot) - 2F(\cdot)\|_{A^p_\alpha} 
\leq C'|t|^{\beta/2}$ for $1< p < 2$.  (There is no need to consider the 
case $p=2$ because then $F$ is a multiple of $k$.)  
It is helpful to use the second iterated difference here because if we 
used only the first difference we would be restricted to 
$0 < \beta \leq 1$.  

Notice the exponent on the $|t|$ is ${\beta/2}$ for $p < 2$. 
This comes from an improvement to the techniques of 
\cite{Khavinson_McCarthy_Shapiro}, which yield
$|t|^{\beta/q}$.  We obtain this improvement by using an 
inequality from \cite{Ball_Carlen_Lieb} 
instead of Clarkson's inequalities.  The inequality 
we use gives a worse constant but a better power on $|t|$.  
The inequality is 
related to the fact that $L^p$ is $2$-uniformly convex for 
$1 < p \leq 2$, but Clarkson's inequalities only show that it is 
$q$-uniformly convex (see \cite{Ball_Carlen_Lieb}). 
This improvement is crucial for applying our results later in the 
paper.  

We then combine our results on extremal functions with our results on 
growth of area integral means.  We find two notable results.  
One is Corollary \ref{thm:weighted_continuous}, which 
applies to $A^p_\alpha$ extremal problems and 
says that if 
$k \in \Lambda^*_{2,A^q_\alpha}$ then $F$ has H\"{o}lder continuous 
boundary values if $2 \leq p < \infty$ and $-1 < \alpha < 0$ or if 
$1 < p < 2$ and $-1 < \alpha < p-2$.  

The other result is Theorem \ref{thm:pext}, which applies to extremal 
problems in unweighted Bergman spaces and says that 
if $k \in \Lambda_{2,A^p}$ and $1 < p < \infty$ then $|F|^{p-1} F' \in L^1$. 
Also $F' \in L^s$ for some $s > 1$.  (This applies to the unweighted 
case).  This is important because it allows for a more elementary 
proof of the results of 
\cite{tjf:pnoteven1}, without using results from 
\cite{Khavinson_Stessin} which rely on deep results from 
the theory of partial differential equations. (In fact, that was the 
original motivation for this paper).

We now discuss a subtlety that arises when dealing 
with area integral means of weighted 
Bergman spaces, since there seem to be two different types of definition of 
integral means possible.  Perhaps the most obvious definition for the
area integral mean of $f$ at radius $r$ is as
\[
\left((\alpha + 1)\int_{|z|<r} |f(z)|^p \, 
 (1-|z|^2)^\alpha \frac{dA(z)}{\pi}\right)^{1/p}, 
\]
or equivalently (except for an unimportant factor of $r^{2/p}$) as
\[
\left((\alpha + 1)\int_{|z|<1} |f(rz)|^p \, 
 (1-|rz|^2)^\alpha \frac{dA(z)}{\pi} \right)^{1/p},
\]
where 
$dA$ is normalized area measure.  On the other hand, we 
could define the integral mean as 
\[
\|f_r\|_{A^p_\alpha} = 
\left((\alpha + 1)\int_{|z|<1} |f(rz)|^p \, 
 (1-|z|^2)^\alpha \frac{dA(z)}{\pi} \right)^{1/p},
\]
where $f_r(z) = f(rz)$.  It seems likely that there are analytic functions 
for which these two  types of 
quantities have different orders of growth.  However, 
we prove that if one of them has growth in $O((1-r)^{\gamma})$ for 
$\gamma \leq 1$, then the so does the other.  

Throughout this paper, we often keep track of constants in inequalities.  
We do not investigate whether these constants are the best possible.  However, 
in future work, we expect to make use of the fact that explicit values for 
these constants are known (even if they are not the best possible values). 

\section{Integral Means and Area Integral Means}

This paper deals with Hardy and Bergman spaces.  See 
\cite{D_Hp} for information on Hardy spaces and 
\cite{D_Ap} and \cite{Zhu_Ap} for information on Bergman spaces.

Let $dA_\alpha (z) =  
(\alpha + 1) (1-|z|^2)^{\alpha} \, dA(z)/\pi$ be the 
standard weighted area measure, where $-1 < \alpha < \infty$.  
Let the Bergman space $A^p_{\alpha}$ be the space of all functions analytic 
in the unit disc such that 
\[
\|f\|_{A^p_\alpha} = \left( \int_{\mathbb{D}} |f(z)|^p \, dA_\alpha(z) 
                        \right)^{1/p} < \infty.
\]
We deal here mainly with the case $1 < p < \infty$. 
For $1 < p < \infty$, the dual of 
$A^p_\alpha$ is isomorphic to $A^q_\alpha$, where $q$ is the conjugate 
exponent of $p$
(see \cite{Zhu_Ap}).

If $f$ is analytic in the unit disc, we define the integral mean of 
order $p$ as
\[
M_p(r,f) = \left(\frac{1}{2\pi}
  \int_0^{2\pi} |f(re^{i\theta})|^p \, d\theta \right)^{1/p}
\]
for $0 < p < \infty$.  If $p = \infty$, we define 
$M_\infty(r,f) = \sup_{0 \leq \theta < 2\pi} |f(re^{i\theta})|$. It is known that the 
integral means increase with $r$ (see \cite{D_Hp}). 
A function is said to be in $H^p$ if its integral means of order 
$p$ are bounded, and we define $\|f\|_{H^p} = \sup_{0 \leq r < 1} M_p(r,f)$. 

We now formally define the area integral means. 
\begin{definition}
Let $f$ be in $L^p_{\rm{loc}}(\mathbb{D})$.  Define 
\[
A_{p,\alpha}(r,f) = 
\left( (\alpha + 1) \int_{\mathbb{D}} |f(rz)|^p  (1-|rz|^2)^{\alpha} 
\, \frac{dA(z)}{\pi} \right)^{1/p}
\]
and define 
\[
\widetilde{A}_{p,\alpha}(r,f) = 
   \left( (\alpha + 1)  \int_{r\mathbb{D}} |f(z)|^p (1-|z|^2)^{\alpha} 
\, \frac{dA(z)}{\pi} 
\right)^{1/p} .
\]
We will also define
\[
\widehat{A}_{p,\alpha}(r,f) = 
\left( (\alpha + 1) 
 \int_{\mathbb{D}} |f(rz)|^p (1-|z|^2)^{\alpha} \, 
\frac{dA(z)}{\pi}  \right)^{1/p}.
\]
and 
\[
\widehat{\widetilde{A}}_{p,\alpha}(r,f) = 
   \left( (\alpha + 1) 
\int_{r\mathbb{D}} |f(z)|^p (1-|z/r|^2)^\alpha  \frac{dA(z)}{\pi} 
\right)^{1/p} .
\]

\end{definition}
A change of variables shows that 
$\widetilde{A}_{p,\alpha}(r,f) = r^{2/p} A_{p,\alpha}(r,f)$, so that 
$A_{p,\alpha}(r,f) \asymp \widetilde{A}_{p,\alpha}(r,f)$ as $r \rightarrow 1^{-}$. 
Similarly, 
$\widehat{\widetilde{A}}_{p,\alpha}(r,f) = r^{2/p} \widehat{A}_{p,\alpha}(r,f).$
We also have that $\widehat{A}_{p,\alpha}(r,f) = \|f_r\|_{A^p_\alpha}$, where 
$f_r(z) = f(rz)$. 

In \cite{Zhu_Volume-Integral-Means, Zhu_Area-Integral-Means_Convex-I,
Zhu_Area-Integral-Means_Convex-II}, the authors defined area integral 
means and studied their convexity properties.  The area integral means 
they define are equal to 
$\widetilde{A}_{p,\alpha}(r,f)/\widetilde{A}_{p,\alpha}(r,1)$. 
These integral means have the same order of growth as 
$\widetilde{A}_{p,\alpha}$ for $-1 < \alpha < \infty$, so we do not consider 
them separately. 
It could be interesting to see if 
convexity results also hold for analogues of $\widehat{A}_{p,\alpha}$.

The following lemma is sometimes useful in analyzing the case 
 $\alpha > 0$. 
\begin{lemma}\label{lemma:alphagt0}
If
$0 \le \rho \le r^2$ then 
$(1-\rho^2) \le 2 (1-\rho^2/r^2)$.
\end{lemma}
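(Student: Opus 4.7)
The plan is to reduce the inequality to a purely polynomial one in $r$ and $\rho$ and then apply the hypothesis $\rho \le r^2$ directly. First I would note we may assume $r > 0$ (otherwise the quotient $\rho^2/r^2$ is undefined and the statement is vacuous). Multiplying through by $r^2 > 0$, the desired inequality $1-\rho^2 \le 2(1-\rho^2/r^2)$ is equivalent to
\[
  \rho^2(2 - r^2) \le r^2.
\]

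Next I would use the hypothesis $\rho \le r^2$, which gives $\rho^2 \le r^4$, so that
\[
  \rho^2(2 - r^2) \le r^4(2 - r^2) = 2r^4 - r^6 = r^2\bigl(1 - (1-r^2)^2\bigr) \le r^2,
\]
the last inequality holding because $(1-r^2)^2 \ge 0$. This would finish the proof.

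There is really no major obstacle; the only mild subtlety is spotting the identity $2r^2 - r^4 = 1 - (1-r^2)^2$ (or, equivalently, recognizing that $r^2(2-r^2) \le 1$ follows from $0 \le r \le 1$), which is what makes the factor $2$ on the right-hand side sharp enough to absorb the worst case $\rho = r^2$. Everything else is routine algebra.
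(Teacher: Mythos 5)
Your proof is correct, but it takes a different route from the paper's. The paper argues in one line that for fixed $0 < r < 1$ the ratio $(1-\rho^2)/(1-\rho^2/r^2)$ is increasing in $\rho$, so its maximum on $[0,r^2]$ occurs at the endpoint $\rho = r^2$, where it equals $(1-r^4)/(1-r^2) = 1+r^2 \le 2$. You instead clear denominators to reduce the claim to the polynomial inequality $\rho^2(2-r^2) \le r^2$ and then substitute the worst case $\rho^2 \le r^4$, finishing with the identity $2r^4 - r^6 = r^2\bigl(1-(1-r^2)^2\bigr)$. Both arguments are elementary and both ultimately hinge on the same endpoint computation; the paper's monotonicity observation makes it transparent that the extremal case is $\rho = r^2$ and that the constant $2$ is asymptotically sharp as $r \to 1^-$ (since $1+r^2 \to 2$), while your version is fully self-contained algebra with no calculus or monotonicity claim to verify. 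One very minor point: your chain $\rho^2(2-r^2) \le r^4(2-r^2)$ implicitly uses $2 - r^2 \ge 0$, which holds since $0 < r < 1$ in context (and the case $2-r^2<0$ would be trivial anyway), so nothing is lost.
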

This lemma is true because 
for  fixed $0 < r < 1$, the 
ratio $(1-\rho^2) / (1-\rho^2/r^2)$ is increasing. 

The following theorem gives information about the growth of integral means 
of functions when information about the growth of their area integral 
means is known. 
We exclude the  case $\beta > 0$ because no function other than  the 
zero function satisfies $A_{p,\alpha}(r,f) \le C(1-r)^{\beta}$ for 
$\beta > 0$. 
\begin{theorem}\label{thm:area-to-hardy}
Suppose $\beta \le 0$.  
Suppose first that 
$\alpha \ge 0$. 
If $A_{p,\alpha}(r,f) \le B(1-r)^{\beta}$ then
\[M_p(r,f) \le 
B (\alpha + 1)^{-1} \frac{2^{-\beta +(1+\alpha)/p} (1-r)^{\beta-(1+\alpha)/p}}
   {(1+\sqrt{r})^{(1+\alpha)/p}} 
\] 
and if 
$A_{p,\alpha}(r,f) \le B|\log(1-r)|$ then 
\[
M_p(r,f) \le 
B (\alpha + 1)^{-1} \frac{2^{(1+\alpha)/p} (1-r)^{-(1+\alpha)/p} 
  (|\log(1-r)| + \log(2))}
   {(1+\sqrt{r})^{(1+\alpha)/p}}. 
\] 
If we suppose instead that $\alpha \leq 0$, then 
if 
$A_{p,\alpha}(r,f) \le B(1-r)^{\beta}$ then
\[
M_p(r,f) \le 
(\alpha + 1)^{-1} \frac{2^{-\beta+(1+\alpha)/p} (1-r)^{\beta-(1+\alpha)/p}}
   {(1+\sqrt{r})^{1/p}(1+\sqrt{r}+r+\sqrt{r^3})^{\alpha/p}}.
\]
If 
$A_{p,\alpha}(r,f) \le B|\log(1-r)|$ then 
\[
M_p(r,f) \le 
B (\alpha + 1)^{-1} \frac{2^{(1+\alpha)/p} (1-r)^{-(1+\alpha)/p} 
  (|\log(1-r)| + \log(2))}
    {(1+\sqrt{r})^{1/p}(1+\sqrt{r}+r+\sqrt{r^3})^{\alpha/p}}. 
\] 

If $\alpha \le 0$, then the same conclusion also holds if in the hypothesis 
$A_{p,\alpha}$ is replaced with $\widehat{A}_{p,\alpha}$. Further, if 
the hypothesis of the theorem holds for all $r > R$, then the conclusion 
holds for all $r > R^2$. 
\end{theorem}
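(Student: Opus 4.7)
The plan is to reduce the theorem to a standard polar-coordinate identity combined with the monotonicity of $M_p(s,f)$ in $s$.  Writing the defining integral of $A_{p,\alpha}(R,f)$ in polar coordinates and making the substitution $s = R\rho$ yields
\[
A_{p,\alpha}(R,f)^p\, R^2 \;=\; 2(\alpha+1)\int_0^R M_p^p(s,f)(1-s^2)^\alpha s\, ds.
\]
Because $M_p(s,f)$ is nondecreasing in $s$, restricting the integration to $s\in[r,R]$ and pulling $M_p^p(r,f)$ out of the integral gives
\[
A_{p,\alpha}(R,f)^p\, R^2 \;\ge\; M_p^p(r,f)\bigl[(1-r^2)^{\alpha+1}-(1-R^2)^{\alpha+1}\bigr].
\]
I would then take $R=\sqrt r$, which applies the hypothesis at the radius $\sqrt r$ and explains why a hypothesis valid for $r>R_0$ only forces the conclusion for $r>R_0^2$: one needs $\sqrt r>R_0$.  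With $R=\sqrt r$ the bracket factors cleanly as $(1-r)^{\alpha+1}\bigl[(1+r)^{\alpha+1}-1\bigr]$.

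The two sign cases of $\alpha$ differ only in how one bounds $(1+r)^{\alpha+1}-1$ from below.  For $\alpha\ge 0$, Bernoulli's inequality gives $(1+r)^{\alpha+1}\ge 1+(\alpha+1)r$, which supplies the $(\alpha+1)$ factor appearing in the denominator of the stated conclusion.  For $\alpha\le 0$, applying the mean value theorem to $t\mapsto(1+t)^{\alpha+1}$ on $[0,r]$ yields $(1+r)^{\alpha+1}-1\ge (\alpha+1)r(1+r)^\alpha$, and the extra $(1+r)^\alpha$ is precisely what produces the factor $(1+\sqrt r+r+\sqrt{r^3})^{\alpha/p} = \bigl((1+\sqrt r)(1+r)\bigr)^{\alpha/p}$ in the denominator of the $\alpha\le 0$ bound.

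Substituting the hypothesis $A_{p,\alpha}(\sqrt r,f)\le B(1-\sqrt r)^\beta$ and using the identity $1-\sqrt r=(1-r)/(1+\sqrt r)$ converts $(1-\sqrt r)^\beta$ into $(1-r)^\beta(1+\sqrt r)^{-\beta}$; taking $p$-th roots and collecting the powers of $(1-r)$ and $(1+\sqrt r)$ then puts the bound into the stated shape.  Since $\beta\le 0$, the estimate $(1+\sqrt r)^{-\beta}\le 2^{-\beta}$ supplies the numerical factor $2^{-\beta+(1+\alpha)/p}$ once the exponents of $(1+\sqrt r)$ are regrouped into the factor $(1+\sqrt r)^{(1+\alpha)/p}$ in the denominator.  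The logarithmic version is handled in exactly the same way, using $|\log(1-\sqrt r)|=|\log(1-r)-\log(1+\sqrt r)|\le|\log(1-r)|+\log 2$ in place of $(1-\sqrt r)^\beta$.

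For the final claim about $\widehat A_{p,\alpha}$ when $\alpha\le 0$, I would simply note the pointwise inequality $(1-\rho^2)^\alpha\ge(1-R^2\rho^2)^\alpha$, which implies $\widehat A_{p,\alpha}(r,f)\ge A_{p,\alpha}(r,f)$; the $\widehat A$ hypothesis is therefore stronger than the $A$ one, and the conclusion transfers immediately from the case already proved.  The main obstacle will be the bookkeeping of constants: in particular, one has to keep the two Bernoulli-type refinements (for $\alpha\ge 0$ and $\alpha\le 0$) separate and arrange the $(1+\sqrt r)$ and $(1+r)$ factors so that the final exponents on $(1-r)$, $(1+\sqrt r)$ and $(1+r)$ reproduce the displayed forms in both the polynomial and logarithmic cases.
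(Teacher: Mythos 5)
Your proposal is correct and follows essentially the same route as the paper: the same polar-coordinate identity for $r^2A_{p,\alpha}^p$, restriction of the integral to the annulus between $r$ and $\sqrt{r}$, monotonicity of $M_p(s,f)$, the substitution at radius $\sqrt{r}$ that explains the $r>R^2$ loss, and the pointwise weight comparison giving $\widehat{A}_{p,\alpha}\ge A_{p,\alpha}$ for $\alpha\le 0$. The only (cosmetic) difference is that you integrate the weight exactly and then lower-bound $(1+r)^{\alpha+1}-1$ by Bernoulli or the mean value theorem, whereas the paper bounds $(1-t^2)^\alpha$ below by its minimum on the annulus and integrates $2t\,dt$ exactly; the resulting explicit constants differ slightly in form but both yield the stated orders of growth.
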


\begin{proof}
First note that if the hypothesis holds for $\alpha \le 0$ and 
with $\widehat{A}_{p,\alpha}$, it holds for $A_{p,\alpha}$ since then
$A_{p,\alpha}(r,f) \le \widehat{A}_{p,\alpha}(r,f)$. 

First assume that $A_{p,\alpha}(r,f) \le C(1-r)^{-1 + \beta}$. 
Now
\[
\begin{split}
(\alpha + 1)\int_{r^4}^{r^2} M_p^p(\sqrt{u},f) (1-u)^{\alpha} \, du &= \\ 
(\alpha + 1)\int_{r^2}^r M_p^p(t,f) (1-t^2)^{\alpha} 2t \, dt &\leq
   \widetilde{A}_{p,\alpha}^p(r,f) = 
   r^2 A_{p,\alpha}^p(r,f) 
\end{split}
\]
Suppose first that $\alpha \leq 0$ and that 
$A_{p,\alpha}(r,f) \leq B(1-r)^{\beta}$.  To simplify notation we may assume 
$B = 1$.   
Since the integral means are increasing we have 
\[
(r^2-r^4)(1-r^4)^{\alpha} M_p^p(r^2,f) \le 
(\alpha + 1)^{-1} r^2 A_{p,\alpha}^p(r,f).
\]
And thus, we have 
\[
M_p^p(r^2,f) \le 
(\alpha + 1)^{-1} (1-r)^{-(1+\alpha)} 
    \frac{A_{p,\alpha}^p(r,f)}{(1+r)(1+r+r^2+r^3)^\alpha}
\]
and so
\[
\begin{split}
M_p^p(r,f) &\leq 
(\alpha + 1)^{-1} (1-\sqrt{r})^{-(1+\alpha)} 
    \frac{(1-\sqrt{r})^{\beta p}}{(1+\sqrt{r})(1+\sqrt{r}+r+\sqrt{r^3})^\alpha} 
\\
&\leq
(\alpha + 1)^{-1} \frac{2^{-\beta p+1+\alpha} (1-r)^{\beta p-(1+\alpha)}}
   {(1+\sqrt{r})(1+\sqrt{r}+r+\sqrt{r^3})^\alpha} 
\end{split}
\]
The proofs of the other assertions are similar. 
\end{proof}

The following theorem provides a partial converse to 
Theorem \ref{thm:area-to-hardy}.
\begin{theorem}\label{thm:hardy-to-area}
If $M_p(r,f) \le B(1-r^2)^{\beta-(1+\alpha)/p}$ and $\beta < 0$ then 
\[\widetilde{A}_{p,\alpha}(r,f) 
\le B (\alpha + 1) (-\beta p)^{-1/p}(1-r^2)^{\beta}.\] 
If $\beta =0$ then $\widetilde{A}^p_\alpha \le C (\alpha + 1) |\log(1-r^2)|$.  
If instead $\beta >0$ then $f \in A^p_\alpha$. 
\end{theorem}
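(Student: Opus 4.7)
The plan is to use polar coordinates to rewrite $\widetilde{A}_{p,\alpha}^p(r,f)$ as a one-dimensional integral of $M_p^p$, then insert the hypothesis and compute directly. Writing $z=\rho e^{i\theta}$ and integrating in $\theta$ first, I get
\[
\widetilde{A}_{p,\alpha}^p(r,f) \;=\; (\alpha+1)\int_0^r M_p^p(\rho,f)\,(1-\rho^2)^\alpha\, 2\rho\, d\rho.
\]
The key observation is that the exponent $\beta-(1+\alpha)/p$ in the hypothesis is tuned exactly so that raising to the $p$th power and multiplying by the weight $(1-\rho^2)^\alpha$ makes the $\alpha$-dependence cancel: $(1-\rho^2)^{\beta p-(1+\alpha)}(1-\rho^2)^\alpha = (1-\rho^2)^{\beta p - 1}$. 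So the estimate reduces to bounding
\[
(\alpha+1)B^p\int_0^r (1-\rho^2)^{\beta p-1}\,2\rho\, d\rho.
\]

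Next I would apply the substitution $u=1-\rho^2$, $du=-2\rho\, d\rho$, which turns the integral into $(\alpha+1)B^p\int_{1-r^2}^{1} u^{\beta p-1}\,du$. Now I handle the three cases separately. When $\beta<0$, the antiderivative $u^{\beta p}/(\beta p)$ gives
\[
\int_{1-r^2}^{1} u^{\beta p-1}\,du \;=\; \frac{(1-r^2)^{\beta p}-1}{-\beta p} \;\le\; \frac{(1-r^2)^{\beta p}}{-\beta p},
\]
so taking $p$th roots yields $\widetilde{A}_{p,\alpha}(r,f)\le (\alpha+1)^{1/p}B(-\beta p)^{-1/p}(1-r^2)^\beta$, which is the claimed bound (up to the factor $(\alpha+1)^{1/p}$ versus $(\alpha+1)$, a discrepancy I would flag but which does not affect the order of growth). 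When $\beta=0$, the antiderivative is $\log u$, which produces the $|\log(1-r^2)|$ bound stated. When $\beta>0$, the integrand is bounded by $u^{\beta p-1}$ on $[0,1]$, which is integrable, so $\widetilde{A}_{p,\alpha}^p(r,f)$ is bounded uniformly in $r<1$ and hence $f\in A^p_\alpha$ by the equivalence $\widetilde{A}_{p,\alpha}(r,f)\asymp A_{p,\alpha}(r,f)$ noted after the definition.

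There is no real obstacle here; the proof is a direct computation. The only point that might deserve explicit comment is that $M_p(\rho,f)$ need not be assumed increasing for this argument (it is automatically so for analytic $f$, but the reduction to the radial integral is just Fubini), and that the specific form of the exponent in the hypothesis is precisely what makes the weight-cancellation happen and the final power of $(1-r^2)$ come out to $\beta$.
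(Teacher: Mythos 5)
Your proof is correct and follows essentially the same route as the paper's: reduce $\widetilde{A}_{p,\alpha}^p(r,f)$ to the radial integral $(\alpha+1)\int_0^r M_p^p(t,f)(1-t^2)^\alpha\,2t\,dt$, insert the hypothesis so the $\alpha$-powers cancel, and integrate $(1-t^2)^{\beta p-1}$ in the three cases. Your observation that the computation actually yields the factor $(\alpha+1)^{1/p}$ rather than $(\alpha+1)$ is consistent with the paper's own displayed estimate, so flagging that discrepancy in the stated constant is reasonable.
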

\begin{proof}
If $\beta < 1$ 
note that 
\[
\begin{split}
(\alpha + 1)^{-1}
\widetilde{A}_p^p(r,f) &= \int_0^r M_p^p(t,f) 2t (1-t^2)^{\alpha} \, dt \\
& \le
\int_0^r B^p(1-t^2)^{\beta p -1-\alpha}(1-t^2)^{\alpha} 2t \, dt\\ &= 
\int_0^r B^p(1-t^2)^{\beta p -1} 2t \, dt \\
&\le \frac{B^p}{-\beta p} (1-r^2)^{\beta p}.
\end{split}
\]
In the cases $\beta = 0$ or $\beta > 1$, 
a similar computation gives the result. 
\end{proof}

The following two lemmas are used for certain estimates later in the paper. 
They are from \cite{tjf:bergprojbounds}, and can be proven by using 
hypergeometric functions.  
\begin{lemma}\label{lemma:hypergeo_bound}
Suppose that $s < 1$ and $m+s > 1$ and that $k > -1$. 
Let $0 \le x < 1$. Then 
\[
\int_0^1 \frac{(1-y)^{-s}}{(1-xy)^m} y^k \, dy \le C_1(s,m,k) (1-x)^{1-s-m}
\]
where $C_1(s,m,k) < \infty$ is defined by 
\[
C_1(s,m,k) = \frac{\Gamma(k+1)\Gamma(1-s)}{\Gamma(2+k-s)} 
\max_{0 \le x \le 1} {}_2F_1( 2+k-s-m, 1-s; 2+k-s; x).
\]
If $2+k > s+m$ and $2+k > s$, then 
\[
C_1(s,m,k)
=
\frac{\Gamma(s+m-1) \Gamma(1-s)}
{\Gamma(m)}.
\]
\end{lemma}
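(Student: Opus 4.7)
The plan is to recognize the integral as an Euler-type representation of a Gauss hypergeometric function and then extract the boundary behavior via Euler's transformation. Specifically, I would match the integrand $(1-y)^{-s}(1-xy)^{-m} y^k$ against the standard Euler integral
\[
{}_2F_1(a,b;c;x) = \frac{\Gamma(c)}{\Gamma(b)\Gamma(c-b)} \int_0^1 t^{b-1}(1-t)^{c-b-1}(1-xt)^{-a}\,dt,
\]
reading off $a=m$, $b = k+1$, $c = k+2-s$. The hypotheses $k > -1$ and $s < 1$ give precisely $\Rp c > \Rp b > 0$, so the representation is valid and
\[
\int_0^1 \frac{(1-y)^{-s}}{(1-xy)^m} y^k \, dy = \frac{\Gamma(k+1)\Gamma(1-s)}{\Gamma(k+2-s)}\,{}_2F_1(m,\,k+1;\,k+2-s;\,x).
\]

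Next I would apply Euler's transformation ${}_2F_1(a,b;c;x) = (1-x)^{c-a-b}\,{}_2F_1(c-a,c-b;c;x)$, noting $c-a-b = 1-s-m$, to produce the desired factor $(1-x)^{1-s-m}$ and leave behind ${}_2F_1(k+2-s-m,\,1-s;\,k+2-s;\,x)$. The assumption $m+s > 1$ means $c - a - b = -(1-s-m) > 0$ in the transformed hypergeometric (i.e.\ $(k+2-s) - (k+2-s-m) - (1-s) = m+s-1 > 0$), so by Gauss's summation theorem the transformed ${}_2F_1$ has a finite limit as $x \to 1^-$; being continuous on $[0,1]$, it attains a finite maximum there, yielding the general bound with constant $C_1(s,m,k)$ exactly as stated.

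For the explicit formula under the additional conditions $2+k > s+m$ and $2+k > s$, I would observe that the parameters of ${}_2F_1(k+2-s-m, 1-s; k+2-s; x)$ are all strictly positive: $k+2-s-m > 0$ by the first extra assumption, $1-s > 0$ from the standing hypothesis, and $k+2-s > 0$ by the second extra assumption. Hence every coefficient $(a)_n(b)_n/[(c)_n n!]$ in the series is non-negative, and the hypergeometric is an increasing function of $x$ on $[0,1]$. Therefore its maximum is attained at $x=1$ and equals, by Gauss's theorem,
\[
\frac{\Gamma(k+2-s)\,\Gamma(m+s-1)}{\Gamma(m)\,\Gamma(k+1)}.
\]
Multiplying by the prefactor $\Gamma(k+1)\Gamma(1-s)/\Gamma(k+2-s)$ collapses everything to $\Gamma(s+m-1)\Gamma(1-s)/\Gamma(m)$, as claimed.

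There is no substantial obstacle once the identification with ${}_2F_1$ is made; the only thing to be careful about is verifying the parameter positivity needed to upgrade ``the max is finite'' to ``the max is at $x=1$ and equals the Gauss value.'' Everything else is bookkeeping with the Euler integral representation, Euler's transformation, and Gauss's summation theorem, all of which are classical and may be applied without further justification.
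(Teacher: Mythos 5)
Your proof is correct and is exactly the hypergeometric argument the paper intends: the lemma is quoted from an earlier paper of the author with the remark that it ``can be proven by using hypergeometric functions,'' and your chain of Euler integral representation (with $a=m$, $b=k+1$, $c=k+2-s$, valid since $s<1$ and $k>-1$), Euler's transformation producing $(1-x)^{1-s-m}$, finiteness of the maximum via $C-A-B=m+s-1>0$, and the positivity of all three transformed parameters forcing the maximum to sit at $x=1$ where Gauss's theorem gives $\Gamma(k+2-s)\Gamma(m+s-1)/(\Gamma(m)\Gamma(k+1))$, all checks out. The only proof the paper writes out in full is the explicitly labelled alternative (splitting the integral at $1/2$ and at $x$), which yields worse constants and is not the route your argument needs to match.
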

There is another way to prove this lemma with worse constants. 
We break the integral into three pieces, one from $0$ to $1/2$ and 
one from $1/2$ to $x$ and one from 
$x$ to $1$.  
The first integral is bounded by a constant. 
Now, use the fact that if $0 \leq y \leq x$ then 
$(1/2)(1-y^2) \leq 1-xy \leq 1-y^2$ and $1-y \leq 1-y^2 \leq 2(1-y)$ to 
bound the second integral by
\[
C \int_{1/2}^x (1-y)^{-s-m} \, dy.
\]
Now use the fact that $1-x \le 1-xy$ to bound the third integral by 
\[
C (1-x)^{-m} \int_x^1 (1-y)^{-s} \, dy.
\]

\begin{lemma} \label{lemma:1minusrep}
Let $p>1$ and $0<r<1$. Then 
\[
\begin{split}
\frac{1}{2\pi} \int_0^{2\pi} \frac{1}{|1-re^{i\theta}|^p} d\theta &= 
(1-r^2)^{1-p} {}_2F_1\left(1-\frac{p}{2}, 1-\frac{p}{2};1;r^2\right)
\\ &\le 
\frac{\Gamma(p-1)}{\Gamma(p/2)^2} (1-r^2)^{1-p}.
\end{split}
\]
If $p=2$ we have 
\[
\frac{1}{2\pi} \int_0^{2\pi} \frac{1}{|1-re^{i\theta}|^2} d\theta = 
(1-r^2)^{-1}.
\]
\end{lemma}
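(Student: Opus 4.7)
My plan is to expand both $(1-re^{i\theta})^{-p/2}$ and $(1-re^{-i\theta})^{-p/2}$ as power series using the generalized binomial theorem, then multiply and integrate term by term using orthogonality, and finally recognize and transform the resulting hypergeometric series.

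Concretely, first I would write
\[
\frac{1}{|1-re^{i\theta}|^p} = (1-re^{i\theta})^{-p/2}(1-re^{-i\theta})^{-p/2}
\]
and expand each factor via the series $(1-z)^{-p/2} = \sum_{n=0}^{\infty} \frac{(p/2)_n}{n!} z^n$, valid on the unit disc, where $(a)_n = a(a+1)\cdots(a+n-1)$ is the Pochhammer symbol. Since the series converge absolutely for $0 < r < 1$, I can multiply them out, integrate over $[0,2\pi]$, and use $\frac{1}{2\pi}\int_0^{2\pi} e^{i(n-m)\theta}\, d\theta = \delta_{n,m}$ to obtain
\[
\frac{1}{2\pi}\int_0^{2\pi} \frac{d\theta}{|1-re^{i\theta}|^p} = \sum_{n=0}^{\infty} \left( \frac{(p/2)_n}{n!} \right)^2 r^{2n} = {}_2F_1\!\left(\tfrac{p}{2},\tfrac{p}{2};1;r^2\right).
\]
Next I would apply Euler's transformation ${}_2F_1(a,b;c;z) = (1-z)^{c-a-b}\, {}_2F_1(c-a,c-b;c;z)$ with $a=b=p/2$ and $c=1$ to rewrite this as $(1-r^2)^{1-p}\, {}_2F_1(1-p/2,1-p/2;1;r^2)$, which is the claimed identity. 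The case $p=2$ then follows immediately since ${}_2F_1(0,0;1;r^2) = 1$.

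For the inequality, I would observe that the series representation
\[
{}_2F_1\!\left(1-\tfrac{p}{2},1-\tfrac{p}{2};1;r^2\right) = \sum_{n=0}^{\infty} \frac{\bigl((1-p/2)_n\bigr)^2}{(n!)^2}\, r^{2n}
\]
has all nonnegative coefficients, since each is the square of a real number. Hence the function is monotonically increasing in $r^2 \in [0,1)$, so it is bounded above by its limit at $r^2 = 1$. Because $p > 1$, Gauss's summation theorem applies to ${}_2F_1(1-p/2,1-p/2;1;1)$ (the parameter condition $c - a - b = p - 1 > 0$ holds) and yields
\[
{}_2F_1\!\left(1-\tfrac{p}{2},1-\tfrac{p}{2};1;1\right) = \frac{\Gamma(1)\Gamma(p-1)}{\Gamma(p/2)\Gamma(p/2)} = \frac{\Gamma(p-1)}{\Gamma(p/2)^2}.
\]
Combining these gives the stated estimate.

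The main potential snag is justifying the termwise integration and making sure the hypergeometric manipulations (Euler's transformation and Gauss's formula) are valid under the sole hypothesis $p > 1$; but since $0 < r < 1$ guarantees absolute convergence and $p - 1 > 0$ is exactly Gauss's condition, neither step is delicate. The $p=2$ case is handled simultaneously by the calculation above, but can also be verified by direct evaluation of the integral via $\frac{1}{|1-re^{i\theta}|^2} = \sum_{n=-\infty}^{\infty} r^{|n|} e^{in\theta}$.
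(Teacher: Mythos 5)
Your proof is correct and follows exactly the route the paper intends: the paper gives no proof of this lemma, merely citing \cite{tjf:bergprojbounds} and noting it ``can be proven by using hypergeometric functions,'' and your binomial-expansion, Euler-transformation, Gauss-summation argument is precisely that standard hypergeometric computation. All the parameter conditions (absolute convergence for $0<r<1$, nonnegativity of the squared coefficients, and $c-a-b=p-1>0$ for Gauss's theorem) are checked correctly.
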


We are now in a position to prove that $A_{p,\alpha} = O((1-r)^{\beta})$ 
if and only if 
$\widehat{A}_{p,\alpha} = O((1-r)^{\beta})$. 
\begin{lemma}
Let $\beta < 0$ and let $\alpha < 0$. 
If $M_p(r,f) \le C(1-r^2)^{\beta-(1+\alpha)/p}$ then 
$\widehat{A}_{p,\alpha}(r,f) 
\le C (\alpha + 1) 
C_1(-\alpha,1+ \alpha -\beta p,0)^{1/p} (1-r^2)^{\beta}$,
where $C_1$ is defined in Lemma \ref{lemma:hypergeo_bound}.  
\end{lemma}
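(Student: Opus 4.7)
The plan is to reduce the desired bound to a direct application of Lemma~\ref{lemma:hypergeo_bound}. First, passing to polar coordinates $z = \rho e^{i\theta}$ in the definition of $\widehat{A}_{p,\alpha}$ gives
\[
\widehat{A}_{p,\alpha}^p(r,f) = (\alpha+1)\int_0^1 M_p^p(r\rho, f)\,(1-\rho^2)^\alpha\,2\rho\,d\rho.
\]
Substituting the hypothesized bound $M_p^p(r\rho,f) \le C^p (1-r^2\rho^2)^{\beta p - (1+\alpha)}$ and then changing variables via $y = \rho^2$ rewrites the estimate as
\[
\widehat{A}_{p,\alpha}^p(r,f) \le C^p (\alpha+1)\int_0^1 (1-y)^{\alpha}\,(1-r^2 y)^{\beta p - 1 - \alpha}\,dy.
\]

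This final integral matches the integrand of Lemma~\ref{lemma:hypergeo_bound} when the parameters are taken to be $s = -\alpha$, $m = 1 + \alpha - \beta p$, $k = 0$, and $x = r^2$. I would then verify the three hypotheses of that lemma: $s < 1$ is equivalent to $\alpha > -1$; $m + s > 1$ simplifies to $-\beta p > 0$, which follows from $\beta < 0$; and $k > -1$ is trivial. A short arithmetic check shows $1 - s - m = \beta p$, so the lemma yields the bound $C_1(-\alpha, 1+\alpha-\beta p, 0)(1-r^2)^{\beta p}$ on the integral, and taking $p$-th roots produces the desired inequality (modulo the minor discrepancy between the natural prefactor $(\alpha+1)^{1/p}$ and the $(\alpha+1)$ appearing in the statement).

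There is essentially no obstacle here beyond careful parameter-matching. The sign hypothesis $\alpha < 0$ is precisely what makes $s = -\alpha$ positive, so that $(1-y)^{-s}$ is the kind of genuinely singular weight that Lemma~\ref{lemma:hypergeo_bound} is designed to handle; for $\alpha \ge 0$ one would instead invoke Lemma~\ref{lemma:alphagt0} to eliminate the singular factor before estimating. Similarly, the hypothesis $\beta < 0$ is exactly what is needed for $m + s > 1$ and simultaneously for the output exponent $\beta p$ to be negative, so that the resulting estimate reflects actual growth rather than mere boundedness.
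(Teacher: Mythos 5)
Your argument is correct and is essentially the same as the paper's: both pass to polar coordinates, insert the integral-mean bound, substitute $y=\rho^2$, and apply Lemma~\ref{lemma:hypergeo_bound} with $s=-\alpha$, $m=1+\alpha-\beta p$, $k=0$, $x=r^2$ to get the exponent $1-s-m=\beta p$. Your remark about the prefactor is also apt: the clean computation yields $(\alpha+1)^{1/p}$, and the $(\alpha+1)$ in the statement (like the missing $p$-th powers in the paper's own displayed computation) is a harmless imprecision in tracking constants.
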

\begin{proof}
Let $\gamma = -(\beta-(1+\alpha)/p)p = 1+\alpha  - \beta p$.  
Then we have 
\[
(\alpha + 1)^{-1}
\widehat{A}_p(r,f) \le 
 C \int_0^1 \frac{(1-t^2)^{\alpha}}{(1-r^2t^2)^{\gamma}} 2t\, dt = 
 C \int_0^1 \frac{(1-u)^{\alpha}}{(1-r^2u)^{\gamma}} \, du
\]
But by Lemma \ref{lemma:hypergeo_bound}, the integral 
is at most 
$C C_1(-\alpha,\gamma,0) (1-r^2)^{1+\alpha-\gamma} = 
C C_1(-\alpha,\gamma,0)(1-r^2)^{-1 + \beta}$. 
\end{proof}

\begin{theorem}
Suppose that $\beta \le 0$.  
There exists a constant $C > 0$ such that $A_p(r,f) \le C(1-r^2)^{\beta}$ 
if and only if
there exists a constant $C > 0$ such that 
$\widehat{A}_p(r,f) \le C(1-r^2)^{\beta}$.
\end{theorem}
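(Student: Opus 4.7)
The plan is to use the classical integral mean $M_p(r,f)$ as a bridge, combining Theorem~\ref{thm:area-to-hardy}, Theorem~\ref{thm:hardy-to-area}, and the lemma immediately preceding the present theorem. Since $1-r \asymp 1-r^2$ as $r \to 1^-$, I will freely interchange $(1-r)^{\beta}$ and $(1-r^2)^{\beta}$ up to constants. The argument splits cleanly on the sign of $\alpha$, because in each case one direction of the equivalence is immediate from the pointwise comparison of $(1-|z|^2)^\alpha$ and $(1-|rz|^2)^\alpha$.

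When $\alpha \le 0$, the inequality $(1-|rz|^2)^\alpha \ge (1-|z|^2)^\alpha$ (valid because $|rz|\le|z|$) yields $A_{p,\alpha}(r,f) \le \widehat{A}_{p,\alpha}(r,f)$, so the implication $\widehat A \Rightarrow A$ is trivial. For the other direction, I would feed the hypothesis $A_{p,\alpha}(r,f) \le C(1-r^2)^{\beta}$ into Theorem~\ref{thm:area-to-hardy} to obtain $M_p(r,f) \le C'(1-r^2)^{\beta-(1+\alpha)/p}$, and then apply the preceding lemma to recover $\widehat{A}_{p,\alpha}(r,f) \le C''(1-r^2)^{\beta}$.

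When $\alpha \ge 0$, the reversed pointwise comparison gives $\widehat{A}_{p,\alpha} \le A_{p,\alpha}$, so $A \Rightarrow \widehat A$ is trivial. The interesting case is $\widehat A \Rightarrow A$ for $\alpha > 0$, and here I would not use Theorem~\ref{thm:area-to-hardy} or the preceding lemma directly (neither covers this configuration) but instead extract an $M_p$ estimate by hand. Since $M_p$ is increasing, for $t \ge r$ we have $M_p(rt,f) \ge M_p(r^2,f)$, so
\[
\widehat{A}_{p,\alpha}(r,f)^p \ge (\alpha+1) M_p^p(r^2,f) \int_r^1 (1-t^2)^\alpha 2t\, dt = M_p^p(r^2,f)(1-r^2)^{\alpha+1}.
\]
Combining this with the hypothesis $\widehat{A}_{p,\alpha}(r,f) \le C(1-r^2)^{\beta}$ yields $M_p(r^2,f) \le C'(1-r^2)^{\beta-(\alpha+1)/p}$. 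Setting $\rho = r^2$ (valid for $\rho > R^2$ if the hypothesis holds for $r > R$) and using $1-r^2 = 1-\rho \asymp 1-\rho^2$, I can then apply Theorem~\ref{thm:hardy-to-area} to get $\widetilde{A}_{p,\alpha}(\rho,f) = O((1-\rho^2)^{\beta})$, and hence $A_{p,\alpha}(\rho,f) = O((1-\rho^2)^{\beta})$ since the two differ only by the bounded factor $\rho^{-2/p}$.

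The main obstacle is the $\widehat A \Rightarrow A$ direction when $\alpha > 0$: none of the previously stated tools directly converts a bound on $\widehat{A}_{p,\alpha}$ into a bound on $M_p$ when the weight is positive. The monotonicity-based lower bound displayed above is essentially the only new estimate required; everything else is a bookkeeping exercise in chaining together previously established implications and in handling the equivalence $1-r \asymp 1-r^2$ near the boundary together with the reparameterization $\rho = r^2$.
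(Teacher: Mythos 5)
Your argument is correct for $\beta<0$, and for $\alpha>0$ it takes a genuinely different route from the paper. There the paper never passes through $M_p$: it uses Lemma \ref{lemma:alphagt0} (the pointwise weight inequality $1-\rho^2\le 2(1-\rho^2/r^2)$ for $\rho\le r^2$) to compare the two area means directly, obtaining $\widetilde{A}_{p,\alpha}(r^2,f)^p\le 2^{\alpha}\,\widehat{\widetilde{A}}_{p,\alpha}(r,f)^p$ and hence $A_{p,\alpha}(r,f)\lesssim \widehat{A}_{p,\alpha}(\sqrt{r},f)$; this one-line estimate transfers \emph{any} rate of growth, not just powers of $1-r$. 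Your substitute, the lower bound $\widehat{A}_{p,\alpha}(r,f)^p\ge M_p^p(r^2,f)(1-r^2)^{\alpha+1}$ obtained by restricting the radial integral to $[r,1]$ and using monotonicity of the integral means, is correct (it is the same device the paper uses inside the proof of Theorem \ref{thm:area-to-hardy}, which is stated only for $\alpha\ge0$ with $A_{p,\alpha}$ rather than $\widehat{A}_{p,\alpha}$, so you are right that you need to redo it by hand), and chaining it with Theorem \ref{thm:hardy-to-area} does the job when $\beta<0$. For $\alpha\le0$ your route --- Theorem \ref{thm:area-to-hardy} followed by the unlabeled lemma preceding the statement --- is exactly the paper's. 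One small slip: for $\alpha\le0$ the weight comparison is $(1-|rz|^2)^\alpha\le(1-|z|^2)^\alpha$, not $\ge$; your conclusion $A_{p,\alpha}\le\widehat{A}_{p,\alpha}$ is nevertheless the correct one.

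The genuine gap is the endpoint $\beta=0$, which the theorem includes. Your chain fails there in both halves. The lemma preceding the theorem is stated, and is only valid, for $\beta<0$: with $\beta=0$ it would invoke Lemma \ref{lemma:hypergeo_bound} with $m+s=1-\beta p=1$, violating the hypothesis $m+s>1$ (the relevant integral diverges logarithmically as $r\to1$). Likewise Theorem \ref{thm:hardy-to-area} at $\beta=0$ yields only $\widetilde{A}_{p,\alpha}(r,f)=O(|\log(1-r^2)|)$, not $O(1)$. So for $\beta=0$ your argument delivers a logarithmic loss rather than the asserted boundedness. The repair is easy and is what the paper does: for $\alpha<0$ and $\beta=0$ both hypotheses are equivalent to $f\in A^p_\alpha$, since $\widetilde{A}_{p,\alpha}(r,f)$ and $\widehat{A}_{p,\alpha}(r,f)$ both increase to $\|f\|_{A^p_\alpha}$ by monotone convergence; for $\alpha>0$ the direct weight comparison via Lemma \ref{lemma:alphagt0} covers $\beta=0$ automatically. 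You should either treat $\beta=0$ separately along these lines or restrict the chained argument to $\beta<0$.
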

\begin{proof}
The proof is clear in the case $\alpha = 0$ since the two integral 
means are equal.  

Consider now $\alpha < 0$. 
For the case $\beta = 0$, note that since the weight and the integral 
means are both increasing, 
both  
$A_p(r,f)$ and $\widehat{A}_p(r,f)$ are increasing and 
the monotone convergence theorem shows that both approach 
$\|f\|_{A^p_\alpha}$, which proves the theorem in this case.
 
Now consider $\beta < 0$. 
One direction is clear because $\widehat{A}_p(r,f) \ge A_p(r,f)$.  
Note that if 
$A_p(r,f) \le C(1-r^2)^{\beta}$ then 
$M_p(r,f) \le C(1-r^2)^{\beta-(1+\alpha)/p}$, and 
$\widehat{A}_p(r,f) \leq C(1-r^2)^{\beta}$ by the lemma.

Now, for $\alpha > 0$, we have that 
$\widehat{A}_{p,\alpha}(r,f) \le A_{p,\alpha}(r,f)$ because the weights are 
decreasing.  Also note that for fixed $0 < r < 1$, 
Lemma \ref{lemma:alphagt0} shows that 
if $0 \le \rho \le r^2$ then 
$(1-\rho^2) \le 2 (1-\rho^2/r^2)$. 
Thus 
\[
\begin{split}
\widetilde{A}_{p,\alpha}(r^2,f) &= 
\int_0^{r^2} M_p^p(\rho,f) (1-\rho^2)^\alpha 2\rho \, (\alpha + 1) d\rho
\\
&\le
2^{\alpha}\int_0^{r^2} M_p^p(\rho,f) (1-\rho^2/r^2)^\alpha 2\rho \, (\alpha + 1) 
   d\rho
\\
&\le
2^{\alpha}\int_0^{r} M_p^p(\rho,f) (1-\rho^2/r^2)^\alpha 2\rho \, (\alpha + 1) 
    d\rho
\\
&=
2^{\alpha} \widehat{\widetilde{A}}_{p,\alpha}(r,f).
\end{split}
\]
So if $\widehat{A}_{p,\alpha}(r,f) \le C (1-r^2)^{\beta}$ then 
\[
A_{p,\alpha}(r,f) \le 2^{\alpha} \widehat{A}_{p,\alpha}(\sqrt{r},f) \le 
2^{\alpha-\beta}(1-r)^{\beta} 
\le 2^{\alpha + 1 - 2\beta} (1-r^2)^{-1 + \beta}.
\] 
\end{proof}

\section{Bergman Integral Means and Derivatives}
We now discuss the relation between the area integral means of a function 
and the area integral means of its derivative and antiderivative. 
Many of the results in this section can be proved by 
changing to classical integral means and using the corresponding results for 
classical 
integral means and the theorems of the previous section, but we provide 
direct proofs. 

We let $f_s$ denote the dilation of $f$, defined by $f_s(z) = f(sz)$.  
Thus, $(f')_s(z) = f'(sz)$ and $(f_s)'(z) = sf'(sz) = s (f')_s(z)$.

\begin{theorem}\label{thm:area-deriv-to-f}
Let $f$ be analytic in the disc.  Then 
\[
\widehat{A}_{p,\alpha}(r,f-f(0)) \le 
   r \int_0^1 \widehat{A}_{p,\alpha}(r,(f')_s) \, ds.
\]
If also $\beta < -1$ and 
$\widehat{A}_{p,\alpha}(r,f') = O((1-r)^{\beta})$ then 
$\widehat{A}_{p,\alpha}(r,f) = O((1-r)^{1+\beta})$. 

If also $\beta > -1$ and 
$\widehat{A}_{p,\alpha}(r,f') = O((1-r)^{\beta})$ then 
$f \in A^p_\alpha$. 

All the above statements are true if $\widehat{A}$ is replaced 
with any of the other integral means.  
\end{theorem}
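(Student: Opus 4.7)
The plan is to start from the fundamental theorem of calculus along radial segments. For any $z \in \mathbb{D}$ and any $r \in (0,1)$, writing
\[
f(rz) - f(0) = rz \int_0^1 f'(srz)\, ds,
\]
we immediately get $|f(rz)-f(0)| \le r\int_0^1 |f'(srz)|\, ds$ since $|z|<1$. Applying Minkowski's integral inequality in the measure $(\alpha+1)(1-|z|^2)^\alpha dA(z)/\pi$ then pushes the $L^p$-norm inside the integral over $s$, and one recognizes that
\[
\left[(\alpha+1)\int_\mathbb{D} |f'(srz)|^p (1-|z|^2)^\alpha \frac{dA(z)}{\pi}\right]^{1/p} = \widehat{A}_{p,\alpha}(r,(f')_s),
\]
which yields the first assertion.

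Next, I would observe the simple but useful change of variable that recasts the dilation in the argument: $\widehat{A}_{p,\alpha}(r,(f')_s) = \widehat{A}_{p,\alpha}(sr, f')$. Combined with the hypothesis $\widehat{A}_{p,\alpha}(t, f') \le C(1-t)^\beta$, the first part of the theorem becomes
\[
\widehat{A}_{p,\alpha}(r, f-f(0)) \le Cr \int_0^1 (1-sr)^\beta\, ds.
\]
A direct antiderivative gives $\int_0^1 (1-sr)^\beta ds = \frac{1-(1-r)^{\beta+1}}{r(\beta+1)}$. When $\beta<-1$, the term $(1-r)^{\beta+1}$ blows up, dominating the constant, so for $r$ near $1$ the right-hand side is $O((1-r)^{\beta+1})$; adding back the constant $f(0)$ only affects a bounded term, which is absorbed when $\beta+1<0$, yielding $\widehat{A}_{p,\alpha}(r,f) = O((1-r)^{\beta+1})$. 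When $\beta>-1$ the integral converges to $1/(\beta+1)$ as $r \to 1$, so $\widehat{A}_{p,\alpha}(r, f-f(0))$ is uniformly bounded, and the monotone convergence argument used earlier in the paper gives $f-f(0) \in A^p_\alpha$, hence $f \in A^p_\alpha$.

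For the final claim that the same statements hold for the other integral means, I would argue in two ways. First, the main inequality transfers directly: the Minkowski argument goes through verbatim with the weight $(1-|rz|^2)^\alpha$ replacing $(1-|z|^2)^\alpha$, giving $A_{p,\alpha}(r,f-f(0)) \le r\int_0^1 A_{p,\alpha}(r,(f')_s)\,ds$; then the identities $\widetilde{A}_{p,\alpha}(r,\cdot) = r^{2/p} A_{p,\alpha}(r,\cdot)$ and $\widehat{\widetilde{A}}_{p,\alpha}(r,\cdot)=r^{2/p}\widehat{A}_{p,\alpha}(r,\cdot)$ noted after the definition pull the same factor of $r^{2/p}$ out of both sides. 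Second, the asymptotic conclusions for $A_{p,\alpha}$ versus $\widehat{A}_{p,\alpha}$ are equivalent for $\beta\le 0$ by the preceding theorem, so all four versions of the growth statement follow from the $\widehat{A}$ version.

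The only real subtlety, and the step I would be most careful about, is verifying the Minkowski integral inequality in this weighted setting and correctly tracking the dilation identity $\widehat{A}_{p,\alpha}(r,(f')_s)=\widehat{A}_{p,\alpha}(sr,f')$; everything else reduces to the elementary integral $\int_0^1 (1-sr)^\beta ds$ and the already-established equivalences between the four integral means.
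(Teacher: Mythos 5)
Your proposal is correct and follows essentially the same route as the paper: the radial fundamental theorem of calculus bound $|f(rz)-f(0)|\le r\int_0^1|f'(srz)|\,ds$ followed by Minkowski's integral inequality, with the growth conclusions then read off from $r\int_0^1(1-sr)^\beta\,ds$ via the identity $\widehat{A}_{p,\alpha}(r,(f')_s)=\widehat{A}_{p,\alpha}(sr,f')$. The only difference is that you spell out the elementary integral and the transfer to the other integral means, which the paper leaves as ``follow immediately.''
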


\begin{proof}
Let $z = te^{i\theta}$ and suppose that $t < r$. 
Assume without loss of generality that $f(0) = 0$. 
Note that 
\[
|f(z)| \le \int_0^t |f'(\rho e^{i\theta})| d \rho = 
           \int_0^1 |f'(sz)| t \, ds 
\le r \int_0^1 |f'(sz)| \, ds.
\]
Now use Minkowski's inequality to conclude that 
\[
\widehat{A}_{p,\alpha}(r,f) \le 
   r \int_0^1 \widehat{A}_{p,\alpha}(r,f'(s \cdot)) \, ds
\]
The same proof words for the other integral means. 
The results about order of growth follow immediately. 
\end{proof}

\begin{theorem}
Let $\beta \ge 0$. 
If $\widehat{\widetilde{A}}_{p,\alpha}(r,f) 
= O((1-r)^{-\beta})$  then 
\(
\widehat{\widetilde{A}}_{p,\alpha}(r,f') = %
   O((1-r)^{-1-\beta}).
\)
\end{theorem}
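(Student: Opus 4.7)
\emph{Plan.} The strategy is to reduce the problem to the classical integral means $M_p(r,f)$, where a standard Cauchy-formula derivative estimate is available, and then translate back to area integral means using the theorems already proved in this section.

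First, I would convert the hypothesis. The identity $\widehat{\widetilde{A}}_{p,\alpha}(r,f) = r^{2/p}\widehat{A}_{p,\alpha}(r,f)$ shows the hypothesis is equivalent to $\widehat{A}_{p,\alpha}(r,f) = O((1-r)^{-\beta})$ as $r \to 1^{-}$. Since the growth exponent $-\beta$ is non-positive, the theorem just proved that relates $A_{p,\alpha}$ and $\widehat{A}_{p,\alpha}$ applies and yields $A_{p,\alpha}(r,f) = O((1-r)^{-\beta})$. Theorem \ref{thm:area-to-hardy} then gives the classical mean bound
\[
M_p(r,f) = O\bigl((1-r)^{-\beta - (1+\alpha)/p}\bigr).
\]

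Second, I would bound $M_p(r,f')$ via Cauchy's integral formula. Writing $f'(z) = (2\pi i)^{-1}\oint_{|w|=r'} f(w)(w-z)^{-2}\,dw$ for $|z|=r$ and any $r' > r$ expresses $f'$ on $|z|=r$ as a convolution in the angular variable with a kernel whose $L^1$ norm (computable via Lemma \ref{lemma:1minusrep} with $p=2$) equals $r'/(r'^2 - r^2)$. Young's inequality then gives $M_p(r,f') \le \frac{r'}{r'^2-r^2} M_p(r',f)$. Taking $r' = (1+r)/2$ yields $M_p(r,f') \le C(1-r)^{-1} M_p((1+r)/2, f)$, and substituting the Step~1 bound at radius $(1+r)/2$ gives
\[
M_p(r,f') = O\bigl((1-r)^{-1-\beta-(1+\alpha)/p}\bigr).
\]

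Third, since $-1-\beta < 0$ (because $\beta \ge 0$), Theorem \ref{thm:hardy-to-area} applied to $f'$ with growth exponent $-1-\beta$ yields $\widetilde{A}_{p,\alpha}(r,f') = O((1-r^2)^{-1-\beta}) = O((1-r)^{-1-\beta})$. Using the same $r^{2/p}$ identity and $A_{p,\alpha}\leftrightarrow \widehat{A}_{p,\alpha}$ equivalence as in Step~1 then gives $\widehat{\widetilde{A}}_{p,\alpha}(r,f') = O((1-r)^{-1-\beta})$, completing the proof.

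The main obstacle, and the reason for the detour through classical integral means, is that the most natural direct approach appears to lose sharpness. Applying the pointwise estimate $|f'(z)|^p \le C\rho^{-p-2}\int_{D(z,\rho)}|f|^p\,dA$ with $\rho$ proportional to $1-r$ and swapping the order of integration via Fubini produces a spurious factor of $(1-r)^{-\alpha/p}$ when $\alpha > 0$, since the unweighted integral of $|f|^p$ must then be recovered from the weighted norm at a slightly larger radius. Taking $\rho$ proportional to $r-|z|$ instead leads after a Fubini swap to an integrand $|f(w)|^p(r-|w|)^{-p}$ that is too singular near $|w|=r$ to be absorbed into the hypothesis. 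Going through $M_p$ avoids both difficulties because the sharp Cauchy-convolution bound for classical means already knows how to convert growth at radius $(1+r)/2$ into growth at $r$ with the correct loss.
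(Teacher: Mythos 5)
Your proof is correct, but it takes a genuinely different route from the paper's. The paper proves the estimate directly at the level of area integral means: it writes $f'(\rho r e^{i\theta})$ via the Cauchy formula on the circle of radius $\rho$, so that the resulting operation is an angular convolution at each fixed radius; since the weight $(1-\rho^2/r^2)^\alpha$ and the radial measure are untouched by an angular convolution, a single application of Minkowski's inequality together with Lemma \ref{lemma:1minusrep} (at $p=2$) gives the clean quantitative inequality $\widehat{\widetilde{A}}_{p,\alpha}(r^2,f') \le C\,\widehat{\widetilde{A}}_{p,\alpha}(r,f)(1-r^2)^{-1}$, uniformly in $\alpha$ and with no case analysis. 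You instead reduce to classical integral means: Theorem \ref{thm:area-to-hardy} converts the hypothesis into $M_p(r,f)=O((1-r)^{-\beta-(1+\alpha)/p})$, the standard Hardy--Littlewood/Cauchy convolution estimate at radius $(1+r)/2$ gives $M_p(r,f')$, and Theorem \ref{thm:hardy-to-area} plus the equivalence of $A_{p,\alpha}$- and $\widehat{A}_{p,\alpha}$-growth converts back. This is precisely the alternative the author flags at the start of Section 2 (``many of the results in this section can be proved by changing to classical integral means\dots but we provide direct proofs''). Each step of yours checks out, including the kernel $L^1$ norm $r'/(r'^2-r^2)$ and the applicability of the cited theorems (all relevant exponents are nonpositive). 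What the paper's route buys is brevity, explicit constants, and an inequality between the two integral means themselves rather than just an $O$-statement; what your route buys is modularity, at the cost of accumulating the radius changes ($r\mapsto r^2$, $r\mapsto\sqrt{r}$) and sign-of-$\alpha$ case splits hidden inside Theorems \ref{thm:area-to-hardy}, \ref{thm:hardy-to-area} and the equivalence theorem, which degrades the constants. Your closing remarks about why the pointwise subharmonicity estimate on discs $D(z,\rho)$ loses a power of $(1-r)^{\alpha/p}$ are a fair diagnosis, but note that the paper's radial-ratio trick (estimating $f'$ at $\rho r e^{i\theta}$ from $f$ on $|\zeta|=\rho$) is exactly what sidesteps that difficulty without leaving the area-integral setting.
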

\begin{proof}
We may assume without loss of generality that $f(0) = 0$. 
Let $z = re^{i\theta}$, where $0 < r < 1$.  
Note that 
\[
f'(\rho z) = \frac{1}{2\pi i} \int_{|\zeta| = \rho} 
                 \frac{f(\zeta)}{(\zeta-\rho z)^2} d\zeta.
\]
Now
parametrize the integral with $\zeta = \rho e^{i(t+\theta)}$ to see that 
\[
\begin{split}
f'(\rho r e^{i\theta}) &= 
\frac{1}{2\pi} \int_0^{2\pi} 
      \frac{f(\rho e^{i(t+\theta)}) \rho e^{i(t-\theta)}}
             {(\rho e^{it} - \rho r)^2} \, dt \\
&=
\frac{1}{2\pi} \int_0^{2\pi} 
      \frac{f(\rho e^{i(t+\theta)}) \rho^{-1} e^{i(t-\theta)}}
             {(e^{it} - r)^2} \, dt
\end{split}
\]

Let $g(z) = f(z)/z$. 
Apply the integral 
$\int_0^r \int_0^{2\pi} 2\rho (1-\rho^2/r^2)^{\alpha} \, d\theta \, d\rho$ 
to both 
sides of the above equation and use Minkowski's inequality to conclude 
that
\[
\begin{split}
\widehat{\widetilde{A}}_{p,\alpha}(r,(f')_r) \le 
   \frac{1}{2\pi} \int_0^{2\pi} 
    \frac{\widehat{\widetilde{A}}_{p,\alpha}(r,g)}{|e^{it}-r|^2} \, dt 
&\le \widehat{\widetilde{A}}_{p,\alpha}(r,g) (1-r^2)^{-1} \\
&= 
r^{2/p} \widehat{A}_{p,\alpha}(r,g) (1-r^2)^{-1}.
\end{split}
\]
The last inequality follows from 
Lemma \ref{lemma:1minusrep}. 

If $h \in A^p_{\alpha}$ with norm $1$, then   
because the integral means 
increase, we have that $\|zh\|_{A^p_\alpha}$ is minimized when 
$h = 1$.  
Let $M_{\alpha, p} = \|z\|_{A^p_\alpha}$.  Then the above argument implies 
that $\widehat{A}_{p,\alpha}(r,g) = \|f(rz)/(rz)\|_{A^{p}_\alpha} \leq 
M_{\alpha,p}^{-1}r^{-1} \|f(rz)\|.$
Thus for large enough $r$ we have that
$\widehat{\widetilde{A}}_{p,\alpha}(r,(f')_r) \leq 
C \widehat{\widetilde{A}}_{p,\alpha}(r,f) (1-r^2)^{-1}$ for some constant $C$. 

A change of variables shows that 
$\widehat{\widetilde{A}}_{p,\alpha}(r,(f')_r) = 
r^{-2} \widehat{\widetilde{A}}_{p,\alpha}(r^2, f')$. 
Thus
\[
\widehat{\widetilde{A}}_{p,\alpha}(r^2,f') \le 
C \widehat{\widetilde{A}}_{p,\alpha}(r,f) (1-r^2)^{-1}.
\]

So if $\widehat{\widetilde{A}}_{p,\alpha}(r,f) \le C (1-r^2)^{-\beta}$, then 
$\widehat{\widetilde{A}}_{p,\alpha}(r^2,f') \le C (1-r^2)^{-1-\beta}$.  
\end{proof}

\section{Bergman Mean H\"{o}lder Functions}
We now come to the relation between mean smoothness of functions and 
the growth of their area integral means.  The results in this section are 
similar to some of those in 
\cite{GSS_Mean-Lipschitz}, except we use the second iterated difference 
instead of the first. 
The proof techniques are a combination of those in 
\cite{GSS_Mean-Lipschitz} and the techniques of Zygmund in 
\cite{Zygmund_Smooth-Functions} (see also Chapter 5 of \cite{D_Hp}). 

\begin{definition}
Let $f \in A^p_\alpha$.  Suppose 
\[
\|f(e^{it} \cdot) + f(e^{-it} \cdot) - 2f(\cdot) \|_{p,\alpha} 
\le C |t|^{\beta}
\]
for some constant $C$.  
We then say that $f \in \Lambda^*_{\beta, A^p_\alpha}$.  Furthermore, we 
define $\|f\|_{\Lambda^*,\beta,A^p_\alpha}$ 
to be the infimum of the constants $C$ such 
that the above inequality holds. 

We define $\Lambda^*_{\beta, H^p}$ similarly, but instead use the 
$H^p$ norm.
\end{definition}

We now prove Theorems \ref{thm:lip-to-bergman-growth} and 
\ref{thm:bergman-growth-to-lip}, which identify the classes 
$\Lambda^*_{\beta,A^p_\alpha}$ with the classes of functions whose second 
derivatives have area integral means with a certain order of growth. 
Both of the theorems are known to be true if we replace 
$\Lambda^*_{\beta,A^p_\alpha}$ with $\Lambda^*_{\beta, H^p}$ and area integral 
means with classical integral means.  
(For further information and corresponding results in higher dimensions, see 
Chapter V of \cite{Stein_Sing-Int-Diff}). 

The proof of this theorem and the one following are similar to the proof 
of Theorem 5.3 in \cite{D_Hp}, and also bear some resemblance to 
techniques from \cite{GSS_Mean-Lipschitz}. 

\begin{theorem}\label{thm:lip-to-bergman-growth}
Let $f \in A^p_\alpha$. Suppose that 
$\|f\|_{\Lambda^*,\beta, A^p_\alpha} < B$ for some $0 < \beta \le 2$.  Then 
$A_{p,\alpha}(r,f'') = \leq C B(1-r)^{\beta-2}$ where $C$ depends only 
on $\alpha$, but not on $\beta$. 

If in addition we have $0 < \beta < 1$ then 
$A_{p,\alpha}(r,f') = O((1-r)^{\beta-1})$ and 
if also $-1 < \alpha \leq 0$ then
$\widehat{A}_{p,\alpha}(r,f') \leq 382.5B(1-\beta)^{-1} (1-r)^{-1+\beta}$ for 
$R < r < 1$, where $R$ is some universal constant. 

If $1 < \beta < 2$ we have $f' \in A^p_{\alpha}$. 

If $\beta = 1$ we have $A_{p,\alpha}(r,f') = O(|\log(1-r)|)$ and 
if also $-1 < \alpha \leq 0$ we have 
$\widehat{A}_{p,\alpha}(r,f') \leq 382.5B |\log(1-r)|$  for 
$R < r < 1$, where $R$ is some universal constant.

\end{theorem}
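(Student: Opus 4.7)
My plan is to adapt the classical Zygmund-type argument for the Hardy-space case of this theorem (cf.\ Chapter~5 of \cite{D_Hp}) to the weighted Bergman setting, using the conversion between area and classical integral means provided by Theorems \ref{thm:area-to-hardy} and \ref{thm:hardy-to-area}. The central task is to represent $f''(\rho e^{i\theta})$ as an integral average of the second differences $\Delta_s^2 f(w) := f(e^{is}w) + f(e^{-is}w) - 2f(w)$ on a slightly larger circle $|w| = \rho'$, say $\rho' = (1+\rho)/2$, weighted by an even kernel in $s$. Starting from Cauchy's formula
\[
f''(\rho e^{i\theta}) = \frac{e^{-2i\theta}}{\pi}\int_{-\pi}^{\pi}
  \frac{f(\rho'e^{i(\theta+s)})\,\rho'e^{is}}{(\rho'e^{is}-\rho)^{3}}\,ds,
\]
I would split the kernel into its even and odd parts in $s$. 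The even part pairs directly with $\Delta_s^2 f(\rho'e^{i\theta})$; the term involving $f(\rho'e^{i\theta})$ itself vanishes because $\int K\,ds = 0$ by Cauchy's theorem. The odd part, which involves first differences of $f$, can be absorbed by an integration by parts that recasts it as another second-difference integrand on the same $s$-scale.

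With this representation in hand, an angular Minkowski inequality gives
\[
M_p(\rho,f'') \le \int_0^{\pi} M_p(\rho',\Delta_s^2 f)\,|\widetilde K_\rho(s)|\,ds.
\]
Applying Theorem \ref{thm:area-to-hardy} with exponent $0$ to the hypothesis $\|\Delta_s^2 f\|_{A^p_\alpha} \le Bs^{\beta}$ produces $M_p(\rho',\Delta_s^2 f) \le C Bs^{\beta}(1-\rho')^{-(1+\alpha)/p}$. The symmetrized kernel $\widetilde K_\rho(s)$ is concentrated on scales $s\sim 1-\rho$ with size $\sim(1-\rho)^{-3}$ near the peak and decay $\sim s^{-3}$ in the tail, so integration yields $M_p(\rho,f'') \le C' B(1-\rho)^{\beta - 2 - (1+\alpha)/p}$. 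Theorem \ref{thm:hardy-to-area}, applied with its $\beta$ set to $\beta-2$, then converts this precisely to $A_{p,\alpha}(r,f'')\le C'' B(1-r)^{\beta-2}$, the excess factor $(1-\rho)^{-(1+\alpha)/p}$ being exactly what is absorbed by the weighted radial integration.

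The remaining statements about $f'$ follow by integration. Writing $f'(z) - f'(0) = z\int_0^1 f''(sz)\,ds$ and applying Theorem \ref{thm:area-deriv-to-f} with $(f,f')$ replaced by $(f',f'')$ gives
\[
\widehat A_{p,\alpha}(r,f') \le |f'(0)| + r\int_0^{1}\widehat A_{p,\alpha}(sr,f'')\,ds,
\]
and substituting the bound from the previous step reduces the problem to estimating $\int_0^1(1-sr)^{\beta-2}\,ds$, whose asymptotics distinguish the three subcases $0<\beta<1$, $\beta=1$, and $1<\beta<2$ in the obvious way. The explicit constant $382.5$ and universal threshold $R$ in the $\widehat A_{p,\alpha}$ refinement for $-1<\alpha\le 0$ should emerge from tracking the hypergeometric constants of Lemma \ref{lemma:hypergeo_bound} through the chain $A_{p,\alpha}\leftrightarrow M_p \leftrightarrow \widehat A_{p,\alpha}$. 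The main obstacle is the first step: the Cauchy kernel $\rho'e^{is}/(\rho'e^{is}-\rho)^{3}$ is neither even nor odd in $s$, so producing an even, sharply concentrated kernel that expresses $f''$ as an integral of $\Delta_s^2 f$ without generating lower-order corrections that blow up faster than $(1-r)^{\beta - 2}$ requires a careful symmetrization and integration-by-parts scheme.
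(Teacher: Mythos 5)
There is a genuine gap at the heart of your argument, and you have in fact located it yourself: the representation of $f''(\rho e^{i\theta})$ as an integral of second differences against an even kernel is never actually produced. The Cauchy kernel $\rho'e^{is}/(\rho'e^{is}-\rho)^3$ has a nonzero odd part, and that odd part pairs with the first differences $f(\rho'e^{i(\theta+s)})-f(\rho'e^{i(\theta-s)})$, which are \emph{not} controlled by the hypothesis $\|\Delta_s^2 f\|_{A^p_\alpha}\le Bs^{\beta}$ (for $1<\beta\le 2$ a first-difference bound of order $s^{\beta}$ would force $f$ to be constant, so no such bound can be extracted). Your claim that this term ``can be absorbed by an integration by parts that recasts it as another second-difference integrand'' is exactly the missing step, not a routine reduction. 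The classical (Zygmund/Duren) resolution, which the paper follows, is to sidestep the Cauchy kernel entirely: write $f(\rho z)$ by the Poisson formula in terms of $f$ on the circle $|z|=r$ and differentiate twice in $\theta$. The kernel $P_{tt}(\rho,t)$ is automatically even and has zero mean, so $f_{\theta\theta}(\rho z)=\frac{1}{2\pi}\int_0^{\pi}P_{tt}(\rho,t)\,\Delta_t^2 f(re^{i\theta})\,dt$ with no odd remainder; the pointwise bounds $|P_{tt}|\lesssim (1-\rho)^{-3}$ and $|P_{tt}|\lesssim (1-\rho)t^{-4}$ then give $\|f_{\theta\theta}(\rho\cdot)\|_{A^p_\alpha}\le CB(1-\rho)^{\beta-2}$. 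One then recovers $f''$ and $f'$ from $f_{\theta\theta}$ via the identities $f_{\theta\theta}=-z^2f''-zf'$ and $f'(z)=(iz)^{-1}\int_0^z f_{\theta\theta}(\zeta)/(i\zeta)\,d\zeta$ together with Theorem \ref{thm:area-deriv-to-f}. If you want to keep a Cauchy-type formula, you must build this algebraic detour through $f_{\theta\theta}$ into it; as written, the symmetrization does not close.

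Two secondary problems with the round trip through classical means. First, it is unnecessary: since $z\mapsto e^{it}z$ is an isometry of $A^p_\alpha$ and the representation is a superposition over rotations, Minkowski's integral inequality applies directly to the $A^p_\alpha$ norm, giving $\|f_{\theta\theta}(\rho\cdot)\|_{A^p_\alpha}\le\frac{1}{2\pi}\int_0^{\pi}|P_{tt}(\rho,t)|\,\|\Delta_t^2 f\|_{A^p_\alpha}\,dt$ with no loss. Second, the detour is lossy at the endpoint: converting back with Theorem \ref{thm:hardy-to-area} requires its exponent $\beta-2$ to be strictly negative and carries a factor $((2-\beta)p)^{-1/p}$, so your route misses the case $\beta=2$ (you would only get $O(|\log(1-r)|)$ there) and cannot deliver the claim that $C$ is independent of $\beta$. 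Your treatment of the $f'$ statements via Theorem \ref{thm:area-deriv-to-f} and the integral $\int_0^1(1-sr)^{\beta-2}\,ds$ is essentially the paper's argument and is fine once the $f''$ bound is in hand.
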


\begin{proof}
By the Poisson integral formula,
\[
f(\rho z) = \frac{1}{2\pi} \int_0^{2\pi} 
   P(\rho, \theta -t) f(re^{it}) \, dt
\]
where $P$ is the Poisson kernel and $0 < \rho < 1$, and 
$z = r e^{i\theta}$.  Thus 
\[
\begin{split}
f_{\theta \theta}(\rho z) &= 
 \frac{1}{2\pi} \int_0^{2\pi} P_{22}(\rho, \theta-t) f(re^{it}) \, dt \\
&=
  \frac{1}{2\pi} \int_0^{2\pi} P_{22}(\rho, -t) f(re^{i(\theta+t)}) \, dt \\
&=
 \frac{1}{2\pi} \int_0^{\pi} P_{22}(\rho, t) \left(
f(re^{i(\theta+t)}) + f(re^{i(\theta-t)}) - 2f(re^{i\theta}) \right) \, dt.
\end{split}
\]
In the last step, we have used the fact that $P_{22}$ is even, and that 
$f(re^{i\theta})$ is constant with respect to $t$ 
and thus has second $\theta$ derivative of 
$0$.

Let $\delta = 1-\rho$.
Since $P(r,t) = (1-r^2)/(1-2r\cos(t)+r^2)$, we have 
\[
P_{tt} = \frac{8r^2 (1-r^2) \sin^2(t)}{(1-2r \cos(t) + r^2)^3}
         - \frac{2r (1-r^2)\cos(t)}{(1-2r \cos(t) + r^2)^2}.
\]
Now for $0 \le t \le \pi$ we have 
$1-2r\cos(t) + r^2 = (1-r)^2 + 4r\sin^2(t/2) \ge (1-r)^2 + 4rt^2/\pi^2$,
and thus
\[
|P_{tt}| \le 
  \frac{16(1-r)t^2}{[(1-r)^2 + 4\pi^{-2} rt^2]^3} + 
  \frac{4(1-r)}{[(1-r)^2 + 4 \pi^{-2} rt^2]^2}.
\]
Thus we have that 
\[
\begin{split}
|P_{tt}| &\le 4^{-1}r^{-3}\pi^6 (1-r)t^{-4} + 4^{-1}r^{-2}\pi^4(1-r)t^{-4} \\
&\le 4^{-1}(\pi^6 + \pi^4 r)(1-r)r^{-3}t^{-4}.
\end{split}
\] 
Let $C_1(r) = (3\pi^6 + 3 \pi^4 r)/(24\pi r^3)$.

We also wish to find the maximum for all $t$.  Note that the second term of 
in the estimate of $|P_{tt}|$ is maximized if $t=0$, and the maximum is 
$4(1-r)^{-3}$.  The first term is more difficult to maximize.  However, 
calculus shows that for fixed $y > 0$ 
the maximum of $x/(y+x)^3$ for $x \ge 0$ occurs at $x=y/2$.  Taking 
$y=(1-r)^2$ and $x=4\pi^{-2}rt^2$ shows that the maximum in the 
first term occurs for 
$4\pi^{-2}rt^2 = (1-r)^2/2$, and the maximum is 
\[
\frac{2}{3}\frac{16(1-r)(2^{-3}\pi^2(1-r)^2r^{-1})}{(1-r)^6} 
\le \frac{4\pi^2}{3} r^{-1} (1-r)^{-3}.
\] 
So \[
|P_{tt}| \le \frac{4 \pi^2 r^2+12r^3}{3r^3} (1-r)^{-1}.
\]
Let $C_2(r) = (2\pi^2r^2 + 6r^3)/(3\pi r^3)$. 

Now Minkowski's inequality gives
\[
\begin{split}
\|f_{\theta \theta}(\rho \cdot)\|_{A^p_\alpha} &\leq 
\frac{B}{2\pi} \int_0^\pi |P_{22}(\rho,t)| |t|^{\beta} \, dt \\
 &\leq 
BC_2(r) \int_0^{\delta} \delta^{-3} t^\beta\, dt + 
  BC_1(r) r^3 \int_{\delta}^{\pi} \delta t^{-4} t^{\beta} \, dt \\
&\le BC_3(r) \delta^{-2 + \beta}
\end{split}
\]
where $C_1(\rho)/(3-\beta) + C_2(\rho)/(\beta + 1) \le C_3(\rho) = 
C_1(\rho) + C_2(\rho)/2 =
(3\pi^6 + 3\pi^4\rho + 8\pi^2 \rho^2 + 24\rho^3)/(24\pi \rho^3)$. 
Thus 
\[
\|f_{\theta \theta}(r \cdot)\|_{A^p_\alpha} \le BC_3(r) (1-r)^{-2 + \beta}.
\] 
This bound goes to $\infty$ as $r \rightarrow 0$, but we can then use the 
fact that $\|f_{\theta \theta}(r \cdot)\|_{A^p_\alpha}$ is increasing to 
see that $\|f_{\theta \theta}(r \cdot)\|_{A^p_\alpha}$ is bounded by  
\[
BC_3(\max(r,1/2))(1-r)^{-2 + \beta},
\]
where the constant $C$ is independent of $r$. 

Now note that  because $f$ is analytic we have
$f_\theta = i z f'(z)$, which is itself an analytic function, so 
$f_{\theta \theta} = i z (i z f'(z))' = -z^2 f''(z) - z f'(z)$.  
Thus 
\begin{equation}\label{fprimeprime_from_ftt}
f''(z) = - z^{-2}(f_{\theta \theta} +  zf'(z))
\end{equation}
and 
\begin{equation}\label{fprime_from_ftt}
f'(z) = (1/(iz))\int_0^z f_{\theta \theta}(\zeta)/(i\zeta) \, d\zeta.
\end{equation} 
Let $g \in A^p_{\alpha}$ with norm $1$.  
Because the integral means %
increase, we have that $\|zg\|_{A^p_\alpha}$ is minimized when 
$g = 1$.  
Call this value $M_{\alpha,p}$. 

Since
$\|f_{\theta \theta}(r \cdot)\|_{A^p_\alpha} = O((1-r)^{-2 + \beta})$, 
equation \eqref{fprime_from_ftt}, the 
finiteness of $M_{\alpha, p}$, and 
Theorem \ref{thm:area-deriv-to-f} shows that $|f'(re^{i\theta})|$ is 
$O((1-r)^{-1.5+\beta})$ if $0 < \beta \leq 1$ and 
$|f'(re^{i\theta})|$ is $O(1)$ if $1 < \beta \leq 2$. 
(The number $1.5$ is not essential here, as any number between 
$1$ and $2$ would work.) 
Note also that the implied constant can be chosen independently of 
$\beta$ by using either the estimate 
$\|f_{\theta \theta}(r \cdot)\|_{A^p_\alpha} = O((1-r)^{-2 + \beta})$ or the estimate 
$\|f_{\theta \theta}(r \cdot)\|_{A^p_\alpha} = O((1-r)^{-2.5 + \beta})$. 

By 
equation \eqref{fprimeprime_from_ftt}, we have that 
$\|f''(r \cdot)\|_{A^p_\alpha} \le C (1-r)^{-2 + \beta}$ for 
large enough $r$, where $C$ is some constant, which again may 
be chosen independently of $\beta$.  
The statement about the order of growth of $f'$ follows from 
Theorem \ref{thm:area-deriv-to-f}.

We now compute the constants more explicitly in the case 
$-1 < \alpha \leq 0$. 
Note that for fixed $p$, $M_{\alpha, p}$ is 
minimized when $\alpha = 0$.  For $\alpha = 0$, the quantity is minimized 
when $p=1$, and it is then $2/3$. 
So 
$\|f_{\theta \theta}(r z)/(rz)\|_{A^p_\alpha} \le M_{\alpha,p}
C_3(r) B (1-r)^{-2 + \beta}$ for large enough $r$. 
Also note that $C_3(r) \rightarrow C_3(1)$ as $r \rightarrow 1$, and that 
$C_3(1) < 42.47$.  The fact that 
$\|f_{\theta \theta}(r z)/(rz)\|_{A^p_\alpha}$ is increasing 
now shows that 
$
\|f_{\theta \theta}(r z)/(rz)\|_{A^p_\alpha}
\le M_{\alpha,p}
C_3(1) B (1-r)^{-2 + \beta} + o((1-r)^{-2 + \beta})$. 
Also, the implied constant in the 
$o((1-r)^{-2 + \beta})$ does not depend on $\alpha$, $\beta$, or $f$.

By Theorem \ref{thm:area-deriv-to-f} we have 
\[
\begin{split}
\widehat{A}_{p,\alpha}(r,i z f') &\leq o((1-r)^{-1 + \beta}) + 
\int_{0}^1 (3/2) C_3(1) B (1-r \rho)^{-2+\beta} \, d\rho \\
& \leq 
(3/2)C_3(1) B (-1 + \beta)^{-1} (1-r)^{-1 + \beta} + 
o((1-r)^{-1 + \beta})
\end{split}
\]
for $0 < \beta < 1$ and similarly 
$\widehat{A}_{p,\alpha}(r,i z f') \le 
(3/2)C_3(1) B |\log(1-r)| + o(|\log(1-r)|)$ for $\beta = 1$. 

Thus, since $M_{\alpha, p} \leq 3/2$, we have 
\[
\widehat{A}_{p,\alpha}(r,f') \le 
95.6 B (-1 + \beta)^{-1} (1-r)^{-1 + \beta} + 
o((1-r)^{-1 + \beta})
\]
for $0 < \beta < 1$ and similarly 
$\widehat{A}_{p,\alpha}(r,f') \le 
95.6 B |\log(1-r)| + o(|\log(1-r)|)$ for $\beta = 1$. 
\end{proof}

\begin{corollary}\label{thm:lip-to-hardy-growth}
Let $f \in A^p_\alpha$ for 
$\alpha \leq 0$.   Suppose 
$\|f(e^{it} \cdot) + f(e^{-it} \cdot) - 2f(\cdot) \|_p \le B |t|^{\beta}$ 
for some $0 < \beta < 1$.  Then for all sufficiently large $r$, one has
\[
\begin{split}
M_p(r,f') &\le  \frac{2^{1-\beta-\alpha/p}}{1-\beta}
95.6 C(1-r)^{-1+\beta-(1+\alpha)/p} + 
  o((1-r)^{-1+\beta - (1+\alpha)/p})\\
&\leq 383 (1-\beta)^{-1} B (1-r)^{-1 + \beta - (1 + \alpha)/p} + 
     o((1-r)^{-1+\beta - (1+\alpha)/p}).\\
\end{split}
\]

If instead $\beta = 1$ we have 
\[M_p(r,f') \leq \frac{192}{1-\beta} B \log(1-r) (1-r)^{(-1+\alpha)/p} + 
o(\log(1-r) (1-r)^{(-1+\alpha)/p}). \]
\end{corollary}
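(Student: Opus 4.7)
The plan is to chain Theorem \ref{thm:lip-to-bergman-growth} with Theorem \ref{thm:area-to-hardy}. First I would interpret the hypothesis $\|f(e^{it}\cdot) + f(e^{-it}\cdot) - 2f(\cdot)\|_p \le B|t|^\beta$ as saying $f \in \Lambda^*_{\beta, A^p_\alpha}$ with $\|f\|_{\Lambda^*,\beta,A^p_\alpha} \le B$. Since $-1 < \alpha \le 0$, Theorem \ref{thm:lip-to-bergman-growth} then directly provides
\[
\widehat{A}_{p,\alpha}(r,f') \le 95.6\, B (1-\beta)^{-1}(1-r)^{-1+\beta} + o\!\left((1-r)^{-1+\beta}\right)
\]
for $0 < \beta < 1$ and all sufficiently large $r$, together with the analogous $|\log(1-r)|$ estimate for $\beta = 1$.

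Next I would apply Theorem \ref{thm:area-to-hardy} to $f'$ with growth exponent $-1+\beta$, using the variant valid for $\alpha \le 0$ which permits $\widehat{A}_{p,\alpha}$ in place of $A_{p,\alpha}$ in the hypothesis. This yields, for all $r$ beyond some $R^2$,
\[
M_p(r,f') \le \frac{(\alpha+1)^{-1}\, 2^{1-\beta+(1+\alpha)/p}\,(1-r)^{-1+\beta-(1+\alpha)/p}}{(1+\sqrt{r})^{1/p}(1+\sqrt{r}+r+\sqrt{r^3})^{\alpha/p}} \cdot 95.6\, B(1-\beta)^{-1} + o(\cdots),
\]
with the obvious $\log$-analogue in the endpoint case.

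Finally I would simplify by letting $r \to 1^-$ in the denominator: one has $(1+\sqrt{r})^{1/p} \to 2^{1/p}$ and $(1+\sqrt{r}+r+\sqrt{r^3})^{\alpha/p} \to 4^{\alpha/p} = 2^{2\alpha/p}$, so the leading constant collapses to $(\alpha+1)^{-1}\,2^{1-\beta-\alpha/p} \cdot 95.6\, B(1-\beta)^{-1}$, with any finite-$r$ discrepancy absorbed into the $o(\cdot)$ remainder. This recovers the first displayed bound of the corollary exactly; crudely estimating the $\alpha, p$-dependent factor $(\alpha+1)^{-1} 2^{1-\beta-\alpha/p}$ by $4$ in the worst allowable case then converts $95.6$ into the stated numerical constant $383$. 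The $\beta = 1$ case is carried out identically, with $|\log(1-r)|$ in place of $(1-\beta)^{-1}(1-r)^{-1+\beta}$ throughout.

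The only real obstacle here is bookkeeping, namely combining the two $o(\cdot)$ error terms coming from the two theorems and tracking the multiplicative constants carefully enough to match the explicit bounds in the statement; no new conceptual ingredient is needed, which is why this appears as a corollary rather than a standalone theorem.
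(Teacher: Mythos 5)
Your proposal is correct and follows exactly the paper's own (one-line) proof, which simply chains Theorem \ref{thm:lip-to-bergman-growth}'s bound on $\widehat{A}_{p,\alpha}(r,f')$ with the $\alpha\le 0$ case of Theorem \ref{thm:area-to-hardy} and lets $r\to 1^-$ in the denominator to extract the constant. The one caveat is that the factor $(\alpha+1)^{-1}$ (really $(\alpha+1)^{-1/p}$) inherited from Theorem \ref{thm:area-to-hardy} is unbounded as $\alpha\to -1^+$, so your step estimating $(\alpha+1)^{-1}2^{1-\beta-\alpha/p}$ by $4$ is not literally justified; however, the corollary as printed omits this factor as well, so this is a defect of the paper's statement rather than of your argument.
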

\begin{proof}
This follows from the above theorem and Theorem 1.2.
\end{proof}

The next corollary is interesting because 
it is known that the condition 
$|\phi(x+t) + \phi(x-t) - 2\phi(x)| \le C|t|^{\beta}$ is not enough to 
guarantee that a function is measurable, much less H\"{o}lder continuous 
of some order (see \cite{D_Hp}, page 72). 
\begin{corollary}
If we exclude the condition $f \in A^p_\alpha$ in the definition of  
$\Lambda^*_{\beta,A_\alpha^p}$, it makes no difference.  
\end{corollary}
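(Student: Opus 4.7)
The claim is that every $f$ analytic in $\mathbb{D}$ satisfying
$\|f(e^{it}\cdot) + f(e^{-it}\cdot) - 2f(\cdot)\|_{A^p_\alpha} \leq C|t|^{\beta}$
with $0 < \beta \leq 2$ automatically lies in $A^p_\alpha$, so that the condition $f \in A^p_\alpha$ in the definition of $\Lambda^*_{\beta,A^p_\alpha}$ is redundant. My plan is to observe that the proof of Theorem \ref{thm:lip-to-bergman-growth} never actually uses the assumption $f \in A^p_\alpha$: the Poisson representation $f(\rho z) = \frac{1}{2\pi}\int_0^{2\pi} P(\rho,\theta-t) f(re^{it})\, dt$ requires only analyticity of $f$ in $\mathbb{D}$ (with $\rho < 1$ and $r < 1$); differentiating twice in $\theta$ and symmetrizing via the evenness of $P_{tt}$ are purely formal manipulations; and the ensuing Minkowski inequality invokes only the $A^p_\alpha$ bound on the second symmetric difference, which is exactly the weakened hypothesis. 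Therefore the estimate $A_{p,\alpha}(r,f'') = O((1-r)^{\beta-2})$ goes through verbatim.

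From this growth estimate I would chain two applications of Theorem \ref{thm:area-deriv-to-f} to antidifferentiate back to $f$. If $1 < \beta \leq 2$, then $\beta - 2 \in (-1,0]$, so the second clause of Theorem \ref{thm:area-deriv-to-f} applied to $f''$ yields $f' \in A^p_\alpha$, and a second application gives $f \in A^p_\alpha$. If $0 < \beta < 1$, then $\beta - 2 \in (-2,-1)$, so the first clause gives $A_{p,\alpha}(r,f') = O((1-r)^{\beta-1})$ with $\beta - 1 \in (-1,0)$, and a second application then yields $f \in A^p_\alpha$. The borderline case $\beta = 1$ is handled by the same Minkowski integration argument behind Theorem \ref{thm:area-deriv-to-f}: the bound $A_{p,\alpha}(r,f'') = O((1-r)^{-1})$ feeds into $\int_0^1 (1-sr)^{-1}\, ds = O(|\log(1-r)|)$, producing $A_{p,\alpha}(r,f') = O(|\log(1-r)|)$, after which $\int_0^1 |\log(1-sr)|\, ds$ remains bounded as $r \to 1^-$.

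The main obstacle is just verifying that no step in the proof of Theorem \ref{thm:lip-to-bergman-growth} secretly relies on $f \in A^p_\alpha$. The monotonicity of $r \mapsto \|f_{\theta\theta}(r\cdot)\|_{A^p_\alpha}$, the pointwise identity $f_{\theta\theta} = -z^2 f''(z) - zf'(z)$, and the finiteness of $M_{\alpha,p} = \|z\|_{A^p_\alpha}$ all rest on analyticity or on $f$-independent facts, not on membership in $A^p_\alpha$. Thus the modified argument is essentially bookkeeping: the same chain of estimates that Theorem \ref{thm:lip-to-bergman-growth} runs from a Lipschitz input to a growth output now also delivers $f \in A^p_\alpha$ as a byproduct.
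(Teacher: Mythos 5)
Your argument is correct, but it reaches the conclusion by a different route than the paper at the one step that matters. You re-open the proof of Theorem \ref{thm:lip-to-bergman-growth} and verify that the hypothesis $f \in A^p_\alpha$ is never actually invoked there: the Poisson representation on discs of radius $r<1$, the symmetrization via the evenness of $P_{tt}$, and the Minkowski step use only analyticity of $f$ in $\mathbb{D}$ together with the bound on the second symmetric difference, so $A_{p,\alpha}(r,f'')=O((1-r)^{\beta-2})$ holds under the weakened hypothesis; you then antidifferentiate twice with Theorem \ref{thm:area-deriv-to-f}, correctly splitting into the cases $0<\beta<1$, $\beta=1$ (logarithmic integral), and $1<\beta\le 2$. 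The paper instead keeps Theorem \ref{thm:lip-to-bergman-growth} as a black box and applies it to the dilations $f_r$, which are automatically in $A^p_\alpha$ and satisfy the same second-difference bound with the same constant $B$ (dilation does not increase $A^p_\alpha$ norms); the resulting estimate $\widehat{A}_{p,\alpha}(r,(f_r)'')\le CB(1-r)^{\beta-2}$, with $C$ independent of $r$, transfers to a bound on $\widehat{A}_{p,\alpha}(r,f'')$ via the identity $\widehat{A}_{p,\alpha}(r,(f_r)'')=r^2\widehat{A}_{p,\alpha}(r^2,f'')$, and the same antidifferentiation finishes. The trade-off: your approach is more direct but carries the burden of auditing an earlier proof (a burden you discharge correctly, and which is the "main obstacle" you identify); the paper's dilation device is the standard way to sidestep that audit entirely, and would remain valid even if the proof of Theorem \ref{thm:lip-to-bergman-growth} were replaced by one that genuinely used membership in $A^p_\alpha$.
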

\begin{proof}
Suppose $\|f(e^{it}z) + f(e^{-it}z) - 2f(z)\|_{p,\alpha} \leq B |t|^\beta$. 
Since for each $r$, the dilation $f_r \in A^p_\alpha$ and satisfies 
$\|f_r(e^{it}z) + f_r(e^{-it}z) - 2f_r(z)\|_{p,\alpha} \leq B |t|^\beta$, 
we can apply the 
above theorem to it to see that 
$\widehat{A}_{p,\alpha}(r,(f_r)'') \leq CB(1-r)^{\beta - 2}$ where $C$ is 
independent of $r$.  
But $\widehat{A}_{p,\alpha}(r,(f_r)'') = r^2 \widehat{A}_{p,\alpha}(r^2,f'')$. 
So then 
$\widehat{A}_{p,\alpha}(r,f'') \leq CB(1-\sqrt{r})^{\beta-2}$. 
Thus 
$\widehat{A}_{p,\alpha}(r,f_r) \leq CB(1-r)^{\beta-2}$, which implies that 
$f \in A^{p}_\alpha$. 
\end{proof}

The next theorem is the converse of Theorem 
\ref{thm:lip-to-bergman-growth}. The proof is very similar to that 
of Theorem 5.3 of \cite{D_Hp} (see also \cite{Zygmund_Smooth-Functions}). 

\begin{theorem}\label{thm:bergman-growth-to-lip}
Suppose $0 < \beta \leq 2$.  
If $\widehat{A}_{p,\alpha}(f'',r) \le B(1-r)^{-2+\beta}$ then 
$f \in \Lambda_{\beta,A^p_\alpha}$. If we assume that 
$f'(0) = 0$ then 
\[
\|f\|_{\Lambda_\beta, A^p_\alpha} \le \left(48\pi^2+12\pi + \frac{1}{\beta}\right) B.
\]
\end{theorem}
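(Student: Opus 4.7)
The plan is to adapt the classical Zygmund-type argument (cf.\ Theorem 5.3 of \cite{D_Hp}) to the weighted Bergman setting. The starting point is the Taylor identity obtained by integrating $\tfrac{d^2}{ds^2}f(e^{is}z) = f_{\theta\theta}(e^{is}z)$ against $(t-|s|)_+$:
\[
f(e^{it}z) + f(e^{-it}z) - 2f(z) \;=\; \int_{-t}^{t}(t-|s|)f_{\theta\theta}(e^{is}z)\,ds,
\]
where $f_{\theta\theta}(w) = -w^2 f''(w) - wf'(w)$. A naive application of Minkowski together with rotation invariance of $\|\cdot\|_{A^p_\alpha}$ would give $\|\Delta^2_t f\|_{p,\alpha} \le t^2\|f_{\theta\theta}\|_{p,\alpha}$, but $f_{\theta\theta}$ typically fails to lie in $A^p_\alpha$, so a splitting is required.

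Set $r = 1 - |t|$ (for $|t| < 1$; larger $|t|$ will be absorbed by the full norm) and $F_r(z) = f(rz)$, and decompose $f = F_r + (f - F_r)$, so that $\Delta^2_t f = \Delta^2_t F_r + \Delta^2_t(f - F_r)$. For the smooth piece, $F_r$ is analytic across $\overline{\mathbb{D}}$, so the Taylor identity applied to $F_r$ combined with Minkowski and rotation invariance yields
\[
\|\Delta^2_t F_r\|_{p,\alpha} \le t^2\|(F_r)_{\theta\theta}\|_{p,\alpha} \le t^2\bigl[r^2\widehat{A}_{p,\alpha}(r,f'') + r\widehat{A}_{p,\alpha}(r,f')\bigr];
\]
the hypothesis controls the first summand by $Br^2(1-r)^{\beta-2}$, and Theorem \ref{thm:area-deriv-to-f} applied to $f''$ (using $f'(0) = 0$) controls the second. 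With $1-r = |t|$ both summands produce $|t|^\beta$ terms. For the residual piece, I expand $\Delta^2_t(f - F_r)$ as $[f(e^{it}\cdot) - F_r(e^{it}\cdot)] + [f(e^{-it}\cdot) - F_r(e^{-it}\cdot)] - 2(f - F_r)$, so that rotation invariance gives $\|\Delta^2_t(f-F_r)\|_{p,\alpha} \le 4\|f - F_r\|_{p,\alpha}$. Writing $f(z) - f(rz) = z\int_r^1 f'(\rho z)\,d\rho$ and applying Minkowski gives
\[
\|f - F_r\|_{p,\alpha} \;\le\; \int_r^1\widehat{A}_{p,\alpha}(\rho,f')\,d\rho,
\]
and Theorem \ref{thm:area-deriv-to-f} together with the hypothesis bounds $\widehat{A}_{p,\alpha}(\rho,f')$; integrating the resulting $(1-\rho)^{\beta-1}$ produces a term of order $|t|^\beta/\beta$, which is the source of the $1/\beta$ in the stated constant.

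The main obstacle is to recover the precise numerical constant $48\pi^2 + 12\pi + 1/\beta$: the $1/\beta$ is natural from the tail integral, but the contribution $48\pi^2 + 12\pi$ must come from carefully propagating the explicit Poisson-kernel constants established in the proof of Theorem \ref{thm:lip-to-bergman-growth} (which underlie Theorem \ref{thm:area-deriv-to-f}) through the present decomposition. A secondary obstacle is uniformity in $\beta$: the intermediate bound on $\widehat{A}_{p,\alpha}(\rho,f')$ behaves differently for $\beta < 1$, $\beta = 1$, and $\beta > 1$, so either the three cases must be handled separately or one must work with the uniform integral representation $\widehat{A}_{p,\alpha}(\rho,f') \le B\rho\int_0^1(1-\rho s)^{\beta-2}ds$ and evaluate carefully. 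In particular, for $1 < \beta \le 2$ the $f'$-level expansion only gives $O(|t|)$ for the residual, so one must use the second-order Taylor remainder $f(z) - f(rz) = (1-r)zf'(rz) + z^2\int_r^1(1-s)f''(sz)\,ds$ to recover the sharp $|t|^\beta$ power.
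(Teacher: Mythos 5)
Your plan is essentially the paper's proof: the same decomposition $f = f_\rho + (f - f_\rho)$ with $1-\rho \asymp |t|$, the same second-order angular control of the dilated piece via $f_{\theta\theta}$, and (after the correction you add at the end for $\beta \ge 1$) the same second-order radial Taylor expansion of the residual, whose first-order term $(1-\rho)zf'(\rho z)$ is then differenced once more against $f''$. The one inaccuracy is your guess about the constant: $48\pi^2+12\pi+1/\beta$ does not come from propagating the Poisson-kernel constants of Theorem \ref{thm:lip-to-bergman-growth}, but simply from choosing $1-\rho = |t|/(4\pi)$ (so that $\rho$ stays bounded away from $0$ for all relevant $|t|$) in the three elementary bounds $3h^2(1-\rho)^{\beta-2}+3h(1-\rho)^{\beta-1}+\beta^{-1}(1-\rho)^{\beta}$.
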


\begin{proof}
We may assume without loss of generality that $f(0) = 0$, since it does 
not affect any bounds in the statement of the theorem.  
(Note however that $e^{it}+e^{-it}-2 = 2(\cos(t)-1)$ so the 
value of $f'(0)$ does affect $\|f\|_{\Lambda_\beta, A^p_\alpha}.$) 
To simplify notation, we will assume without loss of generality that 
$B=1$. 
Let $(\Delta_h f)(z) = f(e^{ih}z)-f(z)$.  So we are required to show that 
$\|\Delta_{-h} \Delta_h f\|_{p,\alpha} = O(h^{\beta})$.  

First write $\Delta_{-h} \Delta_h f = 
\Delta_{-h} \Delta_h (f-f_\rho) + \Delta_{-h} \Delta_h f_\rho$
where $f_\rho(z) = f(\rho z)$, and $0 < \rho < 1$ is a positive number 
that will be chosen later.  

Now note that integration by parts shows that 
\[
f(ze^{it}) - f(\rho z e^{it}) = 
  (1-\rho) ze^{it} f'(\rho z e^{it}) + 
     \int_{\rho}^1 ze^{it}(1-r) f''(rze^{it})ze^{it} \, dr
\]
Notice that if we apply the $A^{p}_{\alpha}$ norm to the integral 
(with respect to $z$) and use Minkowski's inequality, we can 
bound the integral by 
\[
\int_{\rho}^1 (1-r) (1-r)^{-2+\beta} \, dr \le \beta^{-1}(1-\rho)^{\beta}.
\]

We let $\Delta$ apply to the variable $t$.  
Note that 
\[
\begin{split}
\Delta_h ze^{it}f'(\rho z e^{it}) &= 
\Delta_h (ze^{it}) f'(\rho ze^{it})  + 
    ze^{i(t+h)} \Delta_h f'(\rho ze^{it}) \\
&=
\Delta_h (ze^{it}) f'(\rho ze^{it})  + ze^{i(t+h)}
\int_0^h f''(\rho z e^{it} e^{i\theta}) i \rho z e^{it}e^{i\theta} \, d \theta 
\\ &= \text{I} + \text{II}.
\end{split}
\]
The term I is bounded by $|h f'(\rho z e^{it})|$. 
Note that $\|f''(\rho z)\|_{p,\alpha} \leq (1-\rho)^{-3+\beta}$ so 
$\| f'(\rho z e^{it}) \|_{p,\alpha} \le (2-\beta)^{-1} (1-\rho)^{-2+\beta}$
by
Theorem \ref{thm:area-deriv-to-f}.  
But
we also have that 
$\|f''(\rho z)\|_{p,\alpha} \leq (1-\rho)^{-2+\beta}$ so by Theorem 
\ref{thm:area-deriv-to-f}, we have  
$\| f'(\rho z e^{it}) \|_{p,\alpha} \le (\beta-1)^{-1} (1-\rho)^{-1+\beta}.$
Thus in either case 
$\| f'(\rho z e^{it}) \|_{p,\alpha} \le 2 (1-\rho)^{-2+\beta}$. 

Now if we take the $A^p_\alpha$ norm of term II 
with respect to $z$ and use 
Minkowski's inequality we see that it is bounded in absolute value by 
\[
\int_0^h (1-\rho)^{-2+\beta}  \, d \theta = (1-\rho)^{-2+\beta}h.
\]

Putting this all together shows that 
\[ 
\| \Delta_h[ f(ze^{it}) - f(\rho z e^{it})]\|_{p,\alpha} \le
2 h (1-\rho)^{-1+\beta} + h(1-\rho)^{-1+\beta} + \beta^{-1}(1-\rho)^{\beta}.
\]   

Now note that 
\[
-\Delta_{-h} \Delta_h f(z\rho e^{it}) = 
ir\rho e^{i\theta} e^{it} 
\int_0^h f'(z\rho e^{it}e^{is})e^{is} - f'(z\rho e^{it}e^{-is})e^{-is} 
              \, ds.
\]
But the above integrand equals
\begin{equation}\label{eq:lip-to-hardy-growth-integrand1}
(e^{is}-e^{-is}) f'(z\rho e^{it}e^{is}) + 
    [ f'(z\rho e^{it} e^{is}) -  f'(z\rho e^{it} e^{-is})]e^{-is}
\end{equation}
Now the $A^p_{\alpha}$ norm of the first term is bounded by 
$4s (1-\rho)^{-2 + \beta}$, as above. And the second term equals
\[
\int_{-s}^s f''(z\rho e^{it} e^{iu}) i z \rho e^{it} e^{iu} \, du
\]
But applying Minkowski's inequality shows that the 
$A_p^\alpha$ norm of the above integral is bounded by 
$\int_{-s}^s (1-\rho)^{-2+\beta} \le 2s(1-\rho)^{-2+\beta}$.  
Thus the expression in equation \eqref{eq:lip-to-hardy-growth-integrand1} 
is bounded by 
$
6s(1-\rho)^{-2 + \beta}
$.  
Therefore
\[
\|\Delta_{-h} \Delta_h f(z\rho e^{it})\|_{p,\alpha} \le 
\int_0^h 6 s(1-\rho)^{-2 + \beta} \, ds \le 3 h^2 (1-\rho)^{-2+\beta}.
\]

Putting all of this together shows that 
\[
\|\Delta_{-h}\Delta_h f( \rho e^{it} \cdot) \|_{p,\alpha} 
\le 
3(1-\rho)^{-2+\beta} h^2 + (2+1)(1-\rho)^{-1+\beta} h + 
\beta^{-1}(1-\rho)^\beta.
\]
Now, we need to choose $\rho$ in terms of $h$ so that 
$1-\rho = O(h)$ but $1-\rho \geq 0$ for $0 \leq h < 2\pi$. 
We may take $\rho = 1-h/(4\pi)$, which gives the result. 
\end{proof}

We now have the following corollary, which relates 
functions that are mean H\"{o}lder continuous with respect to the 
Bergman space norm with 
functions that are mean H\"{o}lder continuous with respect to the 
Hardy space norm.  

\begin{corollary} Let $f$ be analytic in $\mathbb{D}$, and let 
$(1+\alpha)/p < \beta < 2$ and $1 < p < \infty$.  Then 
$f \in \Lambda^*_{\beta, A^p_\alpha}$ if and only if 
$f \in \Lambda^*_{\beta-(1+\alpha)/p, H^p}$.  The 
``only if'' part of the statement also holds if $\beta = 2$.  

\end{corollary}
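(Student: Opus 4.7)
The plan is to pass through growth conditions on $f''$. Combining Theorems~\ref{thm:lip-to-bergman-growth} and \ref{thm:bergman-growth-to-lip} (and the equivalence of $A_{p,\alpha}$, $\widetilde{A}_{p,\alpha}$, and $\widehat{A}_{p,\alpha}$ orders of growth proved earlier), membership in $\Lambda^*_{\beta, A^p_\alpha}$ is equivalent to $A_{p,\alpha}(r,f'') = O((1-r)^{\beta-2})$ for $0 < \beta \le 2$. The classical analogue (Zygmund's theorem; see Theorem~5.3 of \cite{D_Hp} and \cite{Zygmund_Smooth-Functions}) gives that for $0 < \gamma \le 2$, membership in $\Lambda^*_{\gamma, H^p}$ is equivalent to $M_p(r,f'') = O((1-r)^{\gamma-2})$. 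So it suffices to transfer between area and classical means of $f''$ using Theorems~\ref{thm:area-to-hardy} and \ref{thm:hardy-to-area} under the exponent correspondence $\gamma = \beta - (1+\alpha)/p$.

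For the ``only if'' direction, assume $f \in \Lambda^*_{\beta, A^p_\alpha}$ with $(1+\alpha)/p < \beta \le 2$. Then $A_{p,\alpha}(r,f'') = O((1-r)^{\beta-2})$, and since $\beta - 2 \le 0$ the hypotheses of Theorem~\ref{thm:area-to-hardy} are satisfied; it delivers $M_p(r,f'') = O((1-r)^{\beta-2-(1+\alpha)/p})$. Setting $\gamma = \beta - (1+\alpha)/p \in (0,2]$, this is exactly the Hardy-side characterization of $\Lambda^*_{\gamma, H^p}$. Note that this argument works at the endpoint $\beta = 2$, since Theorem~\ref{thm:area-to-hardy} allows the non-strict inequality $\beta - 2 \le 0$.

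For the ``if'' direction, assume $f \in \Lambda^*_{\beta-(1+\alpha)/p, H^p}$ with $(1+\alpha)/p < \beta < 2$. Then $M_p(r,f'') = O((1-r)^{\beta - 2 - (1+\alpha)/p})$. Writing the exponent in the form $\beta' - (1+\alpha)/p$ with $\beta' = \beta - 2$, the applicability of Theorem~\ref{thm:hardy-to-area} requires the strict inequality $\beta' < 0$, i.e., $\beta < 2$; this is precisely the reason the endpoint is excluded from this direction. Under $\beta < 2$ the theorem gives $\widetilde{A}_{p,\alpha}(r,f'') = O((1-r)^{\beta-2})$, hence $\widehat{A}_{p,\alpha}(r,f'') = O((1-r)^{\beta-2})$ by the earlier equivalence, and Theorem~\ref{thm:bergman-growth-to-lip} concludes $f \in \Lambda^*_{\beta, A^p_\alpha}$.

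The only real obstacle is invoking the classical Zygmund $H^p$ characterization in precisely the range $0 < \gamma \le 2$ via the second iterated difference; this is standard but worth pinning down to a citable statement. A small side point is justifying $f \in H^p$ (respectively $f \in A^p_\alpha$) so that the spaces $\Lambda^*_{\gamma, H^p}$ and $\Lambda^*_{\beta, A^p_\alpha}$ are defined: this is automatic from two integrations of the derivative bound, since $\gamma = \beta - (1+\alpha)/p > 0$ makes $M_p(r,f)$ (respectively the corresponding area mean) bounded --- in fact, the corollary just above shows that the ambient integrability assumption in the definition of $\Lambda^*_{\beta, A^p_\alpha}$ is redundant, and the analogous fact for $H^p$ is classical.
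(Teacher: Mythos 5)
Your proposal is correct and follows essentially the same route as the paper: the paper's one-line proof likewise cites Theorems~\ref{thm:area-to-hardy}, \ref{thm:hardy-to-area}, \ref{thm:lip-to-bergman-growth}, and \ref{thm:bergman-growth-to-lip} together with the Hardy-space (Zygmund) analogues of the latter two, passing through growth conditions on $f''$ exactly as you do. Your accounting of the exponent shift $\gamma=\beta-(1+\alpha)/p$ and of why the endpoint $\beta=2$ survives only in the ``only if'' direction (strict negativity required in Theorem~\ref{thm:hardy-to-area}) is a correct and useful elaboration of details the paper leaves implicit.
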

\begin{proof}
This follows from Theorems \ref{thm:area-to-hardy}, \ref{thm:hardy-to-area}, 
\ref{thm:lip-to-bergman-growth} and \ref{thm:bergman-growth-to-lip}, 
and the fact that the latter two theorems hold when all area integral 
means and Bergman spaces are replaced by classical integral means and 
Hardy spaces.
\end{proof}

\section{Extremal Problems}
Let 
$k \in  A^q_\alpha$ be given, where $1 < q < \infty$. 
Let 
$F \in A^p_\alpha$ be such that $\|F\| = 1$ and 
$\Rp \int_{\mathbb{D}} F \overline{k} \, dA_\alpha$ is as large 
as possible.  There is always a unique function $F$ with this property 
because $L^p(dA_\alpha)$ is uniformly convex, see for example 
\cite{tjf1}. 
The next theorem allows us to obtain knowledge about regularity of 
$F$ from knowledge about the regularity of $k$.  For similar results that 
give regularity results about $k$ from knowledge of the regularity of 
$F$, see \cite{tjf:bergprojbounds}. 

Note the assumption
$\int_{\mathbb{D}} F \conj{k} \, dA_{\alpha} =1 1$ in the statement of the 
next theorem. 
Choosing any scalar multiple of $k$ gives the same function extremal 
function $F$.  However, this assumption simplifies the notation in the 
proof.  Also, it is clear that for bounding 
$\|F\|_{\Lambda^*, \beta, A^p_\alpha}$ in terms of $B$, we must have some lower 
bound on the size of $k$. 
\begin{theorem}\label{thm:ext-regularity}
Suppose that $k \in \Lambda_{\beta,A^p_\alpha}$, and let $F$ be the extremal 
function for $k$.
Then if $2 \le p < \infty$ we have 
$F \in \Lambda_{\beta/p, A^p_\alpha}$ 
while if $1 < p \le 2$ we have
$F \in \Lambda_{\beta/2, A^p_\alpha}$.  

Furthermore, suppose that 
$\int_{\mathbb{D}} f \conj{k} \, dA_{\alpha} = 1$ and 
$\|k(e^{it}\cdot) + k(e^{-it} \cdot) - 2k(\cdot)\|_{q,\alpha} 
\leq B|t|^{\beta}$. 
If $p \ge 2$ then 
$\|F\|_{\Lambda^*, \beta, A^p_\alpha} \le 
2e^{1/e} (B/2)^{1/p}$ whereas if 
$1 < p < 2$ then 
$\|F\|_{\Lambda^*, \beta, A^p_\alpha} \le 
2(p-1)^{-1/2}(B/2)^{1/2}$.
\end{theorem}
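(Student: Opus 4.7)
The plan is to adapt the Khavinson-McCarthy-Shapiro scheme and, crucially in the range $1 < p < 2$, to replace Clarkson's inequality with the Ball-Carlen-Lieb inequality expressing the sharp $2$-uniform convexity of $L^p$. Under the stated normalization I work with $F$ characterized as the unique minimum-norm element of $A^p_\alpha$ satisfying $\Rp \int h\,\overline{k}\,dA_\alpha = 1$ with $\|F\|_{A^p_\alpha} = 1$.

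Writing $F_s(z) = F(e^{is}z)$, $k_s(z) = k(e^{is}z)$, $G = (F_t + F_{-t})/2$, and $\Delta k = k_t + k_{-t} - 2k$: rotation invariance of $dA_\alpha$ gives $\|F_{\pm t}\|_{A^p_\alpha} = 1$, Minkowski yields $\|G\|_{A^p_\alpha} \leq 1$, and a change of variables produces
\[
\int G\,\overline{k}\,dA_\alpha = \int F\cdot \overline{\frac{k_t + k_{-t}}{2}}\,dA_\alpha = 1 + \frac{1}{2} \int F\,\overline{\Delta k}\,dA_\alpha.
\]
Set $\eta = -\frac{1}{2} \Rp \int F\,\overline{\Delta k}\,dA_\alpha$. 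H\"older's inequality and the hypothesis give $|\eta| \leq \frac{B}{2}|t|^\beta$, while extremality forces $\eta \geq 0$: otherwise $\Rp \int G \overline{k}\,dA_\alpha > 1$, and rescaling $G$ by $1/\Rp\int G\overline k\,dA_\alpha$ would produce a function of norm $< 1$ satisfying the constraint, contradicting minimality of $\|F\|_{A^p_\alpha}$. Hence $0 \leq \eta \leq \frac{B}{2}|t|^\beta$. The midpoint $M = (F + G)/2$ then satisfies $\Rp \int M\,\overline{k}\,dA_\alpha = 1 - \eta/2$, which by the same rescaling argument forces $\|M\|_{A^p_\alpha} \geq 1 - \eta/2$.

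For $p \geq 2$, I apply Clarkson's inequality to $F$ and $G$:
\[
\|M\|_{A^p_\alpha}^p + \left\|\frac{F-G}{2}\right\|_{A^p_\alpha}^p \leq \frac{\|F\|_{A^p_\alpha}^p + \|G\|_{A^p_\alpha}^p}{2} \leq 1.
\]
Rearranging and using $1 - (1 - \eta/2)^p \leq p\eta/2$ gives $\|F_t + F_{-t} - 2F\|_{A^p_\alpha} = 2\|F - G\|_{A^p_\alpha} \leq 4(p\eta/2)^{1/p}$; substituting the bound on $\eta$ produces the exponent $|t|^{\beta/p}$ and, upon optimizing the factor $p^{1/p}$ over $p \geq 2$, yields a constant of the form $2 e^{1/e}(B/2)^{1/p}$ as stated.

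The main obstacle is the range $1 < p < 2$, where the standard Clarkson inequality has mixed exponents and would only yield $|t|^{\beta/q}$, an exponent too weak for the subsequent applications in the paper. To circumvent this I invoke the Ball-Carlen-Lieb inequality
\[
\|M\|_{A^p_\alpha}^2 + (p-1)\left\|\frac{F-G}{2}\right\|_{A^p_\alpha}^2 \leq \frac{\|F\|_{A^p_\alpha}^2 + \|G\|_{A^p_\alpha}^2}{2} \leq 1,
\]
valid for $1 < p \leq 2$. Combined with $1 - (1 - \eta/2)^2 \leq \eta$, this yields $\|F_t + F_{-t} - 2F\|_{A^p_\alpha} \leq 4\sqrt{\eta/(p-1)}$, hence the exponent $|t|^{\beta/2}$ with constant of order $(p-1)^{-1/2}(B/2)^{1/2}$. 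The only new ingredient beyond the KMS template is that this sharp $2$-uniform convexity inequality depends only on the abstract $L^p$ Banach-space structure and therefore applies verbatim to $L^p(dA_\alpha)$.
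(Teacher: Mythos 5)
Your proposal is correct and follows essentially the same route as the paper: the same second-difference decomposition via $G=(F_t+F_{-t})/2$, the same change-of-variables/duality step transferring the smoothness of $k$ to a lower bound on $\|(F+G)/2\|$, Clarkson's inequality for $p\ge 2$, and the Ball--Carlen--Lieb $2$-uniform convexity inequality for $1<p<2$. Your reformulation through the minimum-norm problem and the explicit quantity $\eta$ is only a cosmetic variant of the paper's argument with the functionals $\phi_t$, and the small constant-factor discrepancies in your final bounds are of the same kind as those already present in the paper's own computation.
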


\begin{proof}
Suppose that 
$\|k(e^{it}\cdot) + k(e^{-it} \cdot) - 2k(\cdot)\|_{q,\alpha} 
\le B |t|^{\beta}.$ 
Then if we define $\phi_t$ to be the functional associated with 
$k(e^{it}\cdot)$, and let $\phi = \phi_0$, we have 
$\|\phi_t + \phi_{-t} - 2\phi\|_{(A^p)^*} \le B|t|^{\beta}$. 
Now let $\widetilde{\phi} = (\phi_t + \phi_{-t})/2$.  Also let 
$\widetilde{F} = (F(e^{it}\cdot) + F(e^{-it}\cdot))/2$, where 
$F$ is the extremal function for $\phi$.

Thus $\| \widetilde{\phi} - \phi\|_{(A^p)^*} \le C |t|^\beta$ 
and $\|\widetilde{F}\|_p \le 1$.  Note that 
\[\int_{\mathbb{D}} F(z) \conj{k(e^{it}z)} \, dA_\alpha(z) = 
\int_{\mathbb{D}} F(e^{-it}z) \conj{k(z)} \, dA_\alpha(z)\]
 so 
$\phi(\widetilde{F}) = \widetilde{\phi}(F).$ 
But $|\widetilde{\phi}(F)| \ge 1 - \|\phi - \widetilde{\phi}\| 
\ge 1 - B|t|^\beta$. Thus $|\phi(F+\widetilde{F})| \ge 2 - B|t|^{\beta}$ 
so $\|F + \widetilde{F}\| \ge 2 - B|t|^\beta$. 

Now let $p \ge 2$. Clarkson's inequality 
states that 
\[
\|(F + \widetilde{F})/2\|^p + \|(F - \widetilde{F})/2\|^p 
  \le (\|F\|^q + \|\widetilde{F}\|^q)^{p/q}/2^{p/q}. 
\]
Let $B' = B/2$. 
This shows that, if $|t| > B'^{-1/\beta}$, then 
\[
(1-B'|t|^\beta)^p + \|(F-\widetilde{F})/2\|^p 
 \le (\|F\|^q + \|\widetilde{F}\|^q)^{p/q}/2^{p/q} \le 1.
\]

Thus $\|(F-\widetilde{F})/2\|^p \le 1 - (1-B'|t|^\beta)^p.$  
But since $(1-x)^p$ is convex one has $(1-x)^p \ge 1-px$ so 
\[
\|(F-\widetilde{F})/2\|^p \le 1 - (1-B'p|t|^\beta) = B'p|t|^\beta.
\]
Thus $\|F - \widetilde{F}\| \le 2p^{1/p} B'^{1/p} |t|^{\beta/p} 
\le 2e^{1/e} B'^{1/p} |t|^{\beta/p}$ for $|t| > B'^{-1/\beta}$. 
And one always has $\|F-\widetilde{F}\| \leq 2$, so 
$\|F-\widetilde{F}\| \leq 2B'^{1/p} |t|^{\beta/p}$ for 
$|t|>B'^{-1/\beta}$.  But 
$e^{1/e} > 1$ so we always have 
$\| F - \widetilde{F} \| \leq 2e^{1/e} B'^{1/p} |t|^{\beta/p}$. 

The proof for $1<p<2$ is similar, but we use the inequality 
\[
\|(f+g)/2\|^2 + (p-1)\|(f-g)/2\|^2 \le (\|f\|^2 + \|g\|^2)/2
\] 
from \cite{Ball_Carlen_Lieb}. (Note that in the reference the authors 
give the inequality in Proposition 3 as  
$(\|x+y\|^2 + \|x-y\|^2) /2 \ge \|x\|^2 + (p-1)\|y\|^2$, 
which gives the one we use by setting $x=f+g$ and $y=f-g$ and dividing 
by $4$.  
One could also use their inequality from Theorem 1, namely 
$(\|x+y\|^p + \|x-y\|^p) / 2^{2/p} \ge \|x\|^2 + (p-1)\|y\|^2$ and 
set $x = (F+\widetilde{F})/2$ and $y = (F-\widetilde{F})/2$, which also gives 
$\|(F + \widetilde{F})/2\|^p + (p-1)\|(F - \widetilde{F})/2\|^p 
  \leq 1. 
$, which is the same result as we get below.
) 

Letting $f = F$ and $g = \widetilde{F}$ in the displayed inequality 
above gives 
\[
\|(F + \widetilde{F})/2\|^2 + (p-1)\|(F - \widetilde{F})/2\|^2 
  \le (\|F\|^2 + \|\widetilde{F}\|^2)/2 \leq 1. 
\]
As above this yields
\[
(1-C|t|^\beta)^2 + (p-1)\|(F-\widetilde{F})/2\|^2 
 \le 1
\]
for $|t| \leq C^{-1/\beta}$. 
Thus $(p-1)\|(F-\widetilde{F})/2\|^2 \le 1 - (1-C|t|^\beta)^2.$  
As above, this shows that 
\[
\|(F-\widetilde{F})/2\|^2 = \frac{2C}{p-1}|t|^\beta.
\]
Thus $\|F - \widetilde{F}\| \le \sqrt{2} (p-1)^{-1/2} C^{1/2} |t|^{\beta/2}$ 
for $|t| \leq C^{-1/\beta}$. But since we always have 
$\|F - \widetilde{F} \| \leq 2$, we have  
$\|F-\widetilde{F}\| \leq 2C^{1/2} |t|^{\beta/2}$ for 
$|t|>C^{-1/\beta}$.  So in any event, 
$\|F-\widetilde{F}\| \leq 2(p-1)^{-1/2} C^{1/2} |t|^{\beta/2}$. 
\end{proof}

\begin{theorem}\label{thm:pext}
Let $\alpha = 0$. 
If $k \in \Lambda_{2,A^p}$ and $1 < p < \infty$ then $|F|^{p-1} F' \in L^1$. 
Also $F' \in L^s$ for some $s > 1$.  
\end{theorem}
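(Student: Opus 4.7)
The plan is to chain together the regularity results of the previous sections. The hypothesis $k \in \Lambda_{2, A^p}$, together with Theorem \ref{thm:ext-regularity} applied with $\alpha = 0$ and $\beta = 2$, yields $F \in \Lambda^*_{2/p, A^p}$ for $p \ge 2$ and $F \in \Lambda^*_{1, A^p}$ for $1 < p < 2$. Feeding this into Theorem \ref{thm:lip-to-bergman-growth} with $\alpha = 0$ (so that $-1 < \alpha \le 0$) gives growth estimates for the area integral means of $F'$: for $p > 2$, $A_{p,0}(r, F') = O((1-r)^{-1 + 2/p})$, and for $1 < p \le 2$, $A_{p,0}(r, F') = O(|\log(1-r)|)$.

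I would next apply Theorem \ref{thm:area-to-hardy} with $\alpha = 0$ to convert these to bounds on classical integral means: $M_p(r, F') = O((1-r)^{-(p-1)/p})$ for $p > 2$, and $M_p(r, F') = O((1-r)^{-1/p}|\log(1-r)|)$ for $1 < p \le 2$. The crucial observation is that in every case the exponent on $(1-r)$ is strictly greater than $-1$, so $\int_0^1 M_p(r, F')\, dr < \infty$. Radially integrating $F'$ then gives $M_p(r, F) \le |F(0)| + \int_0^r M_p(s, F')\, ds$, which is bounded in $r$; hence $F \in H^p$.

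Both conclusions now follow. For $|F|^{p-1} F' \in L^1(\mathbb{D})$, I would apply H\"older's inequality on each circle with exponents $p/(p-1)$ and $p$ to get
\[
\frac{1}{2\pi} \int_0^{2\pi} |F(re^{i\theta})|^{p-1} |F'(re^{i\theta})|\, d\theta \le M_p(r, F)^{p-1} M_p(r, F'),
\]
then integrate in $r$ and use $F \in H^p$ together with the convergence of $\int_0^1 M_p(r, F')\, dr$. For $F' \in L^s$ with some $s > 1$, I would write $\int_{\mathbb{D}} |F'|^s\, dA$ as a radial integral, bound $M_s(r, F') \le M_p(r, F')$ for $s \le p$ (the angular measure being normalized), and observe that the resulting integrand is integrable near $r = 1$ whenever $s < \min(p, p/(p-1))$, a range that always contains some $s > 1$.

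The main obstacle I foresee is that for $p > 2$ one does not have $F' \in A^p$, so a direct H\"older bound $\int |F|^{p-1}|F'|\, dA \le \|F\|_{A^p}^{p-1} \|F'\|_{A^p}$ is unavailable. The content of the argument is that $M_p(r, F')$, although unbounded as $r \to 1$, grows with an exponent strictly less than $1$; this is just enough to ensure both that $F \in H^p$ and that $\int_0^1 M_p(r, F')\, dr$ is finite, and the same mild growth also yields $F' \in L^s$ for $s$ slightly above $1$.
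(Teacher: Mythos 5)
Your proposal is correct and follows essentially the same route as the paper: chain Theorem \ref{thm:ext-regularity}, Theorem \ref{thm:lip-to-bergman-growth}, and Theorem \ref{thm:area-to-hardy} to get $M_p(r,F')=O((1-r)^{-\gamma})$ with $\gamma<1$, then use H\"older on circles plus radial integration for $|F|^{p-1}F'\in L^1$, and the monotonicity of integral means in $p$ for $F'\in L^s$. The only cosmetic difference is that you derive $F\in H^p$ by integrating $M_p(r,F')$ radially, whereas the paper simply cites Duren's Theorem 5.4 (or Ryabykh's theorem); both are valid.
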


\begin{proof}
First let $2 < p < \infty$.  If $k \in \Lambda_{2,A^p}$ then 
$F \in \Lambda_{2/p, A^p}$ by Theorem 4.1.
But this shows that $A_p(r,F') \leq C(1-r)^{2/p-1}$ by Theorem 3.1.
Thus 
$M_p(r,F') \le C/(1-r)^{2/p-1-1/p} = C/(1-r)^{1/p-1}$.
Then for small $\delta > 0$ we can use the fact that integral means 
increase with $p$ to see that 
$\|F'\|_{A^{1+\delta}}^{1+\delta} \leq \int_0^1 2r(1-r)^{(1/p-1)(1+\delta)} \, dr < 
\infty$. 

Also, $F$ is in $H^p$ by \cite{D_Hp}, Theorem 5.4 (one may also apply 
Ryabykh's theorem to see this). 
Then 
$M_1(r, |F|^{p-1} F') \le M_q(r, |F|^{p-1}) M_p(r,F') \le 
\|F\|_{H^p}^{p-1} C / (1-r)^{1-1/p}$.  
But $\| |F^{p-1}| F'\|_{L^1} = \int_0^1 M_1(r, |F|^{p-1}F') 2r\, dr \le C$. 

Now let $1 < p \le 2$. The function 
$F \in \Lambda^*_{1, A^p}$ by Theorem 4.1.
But this shows that $A_p(r,F') \leq C(1-r)^{-\epsilon}$ for any 
$\epsilon > 0$, by Theorem 3.1.
Thus $M_p(r,F') \le C/(1-r)^{-\epsilon-1/p}.$  
And we may choose 
$\epsilon$ so that $\epsilon + 1/p < 1$. 
The same reasoning as above shows that $F' \in A^{1+\delta}$ for small 
enough delta. 

Also, $F$ is in $H^p$ as above. Then 
\[
M_1(r, |F|^{p-1} F') \le M_q(r, |F|^{p-1}) M_p(r,F') \le 
\|F\|_{H^p}^{p-1} C / (1-r)^{-\epsilon-1/p}.
\]  
But 
\[
\| |F^{p-1}| F'\|_{L^1} = \int_0^1 M_1(r, |F|^{p-1}F') \, 2r dr \le C.
\] 
 
\end{proof}
In fact the same method combined with the theorem of Hardy and 
Littlewood on the comparative growth of integral means 
shows that $|F|^{p-1}F'$ is in $L^s$ for some 
$s > 1$. 

This allows us to give an alternate proof of the results of 
\cite{tjf:pnoteven1}, by providing an 
alternative proof of Lemma 1.1 which avoids using the 
regularity results of Khavinson and Stessin from \cite{Khavinson_Stessin}.  
To give more detail: 
Using the above result, 
we can prove Theorem 2.1 of \cite{tjf:pnoteven1} in exactly the same 
way as it is proved in the reference, since the proof only uses 
the fact that $|F|^{p-1}(\sgn\overline{F}) F' \in L^1$. 
Also, Theorem 3.1 in the reference follows immediately from Theorem 2.1. 
If $k$ is a polynomial, then 
taking $m > \deg(k)$ in Theorem 3.1 and using the fact $|F|^p$ is real 
valued shows that $|F|^p$ is a trigonometric polynomial, so 
$F \in H^\infty$. This and the fact that $F' \in A^s$ for some $s > 0$ 
provides an alternate proof of Lemma 1.1, which is the only place where 
the results of Khavinson and Stessin are directly cited.

The next corollary is similar to the result of Khavinson and 
Stessin from \cite{Khavinson_Stessin} about the H\"{o}lder continuity 
of extremal functions in the unweighted Bergman space, given 
enough regularity on $k$.  Our corollary applies to certain weighted 
Bergman spaces, however, and its method of proof is completely different.
It uses two well known lemmas, which we state after the proof. 

\begin{corollary}\label{thm:weighted_continuous}
Let $1 < p < \infty$ and let $p$ and $q$ be conjugate exponents. 
Suppose $k \in \Lambda^*_{2,A^q_\alpha}$.  Then 
$M_p(r,f') \le C (1-r)^{-1+2/\nu -(1+\alpha)/p}$ where 
$\nu$ is any number greater than $2$ for $1 < p < 2$ 
and $\nu = p$ for $2 \le p < \infty$.  
If $2 \le p < \infty$ and  
$-1 < \alpha < 0$, then $f$ has H\"{o}lder continuous boundary values.
If $1 < p < 2$ and $-1 < \alpha < p-2$, the same conclusion holds. 
\end{corollary}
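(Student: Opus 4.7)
The plan is to chain Theorem \ref{thm:ext-regularity}, Theorem \ref{thm:lip-to-bergman-growth}, and Theorem \ref{thm:area-to-hardy} to obtain the displayed bound on $M_p(r,F')$, and then to feed that bound into a classical Hardy--Littlewood integral-means theorem to extract H\"older continuous boundary values in the specified $\alpha$-ranges.

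First I would apply Theorem \ref{thm:ext-regularity} with $\beta = 2$ to transfer the hypothesis $k \in \Lambda^*_{2, A^q_\alpha}$ to the extremal function: we get $F \in \Lambda^*_{2/p, A^p_\alpha}$ when $2 \le p < \infty$, and $F \in \Lambda^*_{1, A^p_\alpha}$ when $1 < p \le 2$. (The edge case $p=2$, where the theorem is strictly excluded, is even easier since $F$ is a scalar multiple of $k$.) I would then feed $F$ into Theorem \ref{thm:lip-to-bergman-growth} to bound the area integral means of $F'$. For $2 < p < \infty$ the index $2/p$ lies in $(0,1)$, so we land in the first case of that theorem and obtain $A_{p,\alpha}(r,F') = O((1-r)^{2/p - 1})$. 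For $1 < p \le 2$ the index equals $1$ and we land in the logarithmic case, giving $A_{p,\alpha}(r,F') = O(|\log(1-r)|)$; since the logarithm is dominated by $(1-r)^{2/\nu - 1}$ for every $\nu > 2$, both situations can be packaged uniformly as $A_{p,\alpha}(r,F') = O((1-r)^{2/\nu - 1})$, with $\nu = p$ when $p \ge 2$ and any $\nu > 2$ when $1 < p < 2$.

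Since $2/\nu - 1 \le 0$, Theorem \ref{thm:area-to-hardy} applies and converts this to the stated pointwise-in-$r$ bound
\[
M_p(r,F') \le C (1-r)^{-1 + 2/\nu - (1+\alpha)/p},
\]
which is the first conclusion of the corollary.

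For the H\"older part I would then invoke the classical Hardy--Littlewood integral-means theorem: if $M_p(r,F') = O((1-r)^{\gamma - 1})$ with $\gamma > 1/p$, then $F$ extends continuously to $\overline{\mathbb D}$ with boundary values in $\Lambda_{\gamma - 1/p}$. In the present context $\gamma = 2/\nu - (1+\alpha)/p$, so the strict inequality $\gamma > 1/p$ rearranges to $\alpha < 2p/\nu - 2$. For $p \ge 2$ and $\nu = p$ this is exactly $\alpha < 0$, matching the first H\"older hypothesis; for $1 < p < 2$, taking $\nu$ sufficiently close to (but strictly greater than) $2$ reduces the condition to $\alpha < p - 2$, matching the second. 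The main technical subtlety I expect is simply bookkeeping at the endpoint index $\beta = 1$ in Theorem \ref{thm:lip-to-bergman-growth}: its logarithmic factor forces the use of an auxiliary $\nu > 2$, which costs an arbitrarily small loss in the exponent but crucially does not break the strict inequality $\gamma > 1/p$ demanded by the embedding.
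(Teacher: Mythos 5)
Your proposal is correct and follows essentially the same route as the paper: Theorem \ref{thm:ext-regularity}, then Theorem \ref{thm:lip-to-bergman-growth}, then Theorem \ref{thm:area-to-hardy}, and finally the Hardy--Littlewood machinery (which the paper splits explicitly into Lemma \ref{lemma:comparative-means} to pass from $M_p(r,F')$ to $M_\infty(r,F')$ at the cost of $1/p$ in the exponent, followed by Lemma \ref{lemma:means-holder} to obtain H\"older boundary values). Your arithmetic for the ranges $\alpha<0$ (with $\nu=p\ge 2$) and $\alpha<p-2$ (with $\nu\to 2^{+}$) matches the paper's, including the $\epsilon$-loss at the logarithmic endpoint $\beta=1$.
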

\begin{proof}
Suppose $B > \|k\|_{\Lambda^*, 2, A^q_\alpha}$. 

First let $p > 2$. 
First apply Theorem \ref{thm:ext-regularity} to see that $F \in \Lambda_{2/p,A^{p,\alpha}}$.  
Then apply Theorem \ref{thm:lip-to-bergman-growth} to see that $A_{p,\alpha}(r,f') \le C(1-r)^{2/p-1}$. 
Then apply Theorem \ref{thm:area-to-hardy} to see that 
$M_p(r,f') \leq C(1-r)^{2/p-1-1/p-\alpha/p}$.  Then apply the 
Lemma \ref{lemma:comparative-means} see that 
$M_{\infty}(r,f') \leq C(1-r)^{2/p-1-1/p-\alpha/p-1/p} = C(1-r)^{-1-\alpha/p}$.  
If $-1 < \alpha < 0$ then 
$-1-\alpha/p > -1$, so we have that $f$ is H\"{o}lder continuous 
in the disc by Lemma \ref{lemma:means-holder}.  
The H\"{o}lder exponent is $-\alpha/p$.  The H\"{o}lder constant 
is bounded above by 
\[
2e^{1/e}(B/2)^{1/p} \cdot 383\left(1-\frac{2}{p}\right)^{-1} 
  \cdot 2\left(\frac{\Gamma(q-1)}{\Gamma(q/2)^2}\right)^{1/q} 
\cdot \left(1 - \frac{2p}{\alpha}\right).
\]

For $p < 2$, 
apply Theorem \ref{thm:ext-regularity} to see that $F \in \Lambda_{1,A^{p,\alpha}}$.  
Then apply Theorem \ref{thm:lip-to-bergman-growth} to see that $A_{p,\alpha}(r,f') \le C|\log(1-r)| 
\leq C(1-r)^{-\epsilon}$ for any $\epsilon > 0$. 
Then apply Theorem \ref{thm:area-to-hardy} to see that 
$M_p(r,f') \leq C(1-r)^{-\epsilon-1/p-\alpha/p}$.  Then apply 
Lemma \ref{lemma:comparative-means} to see that 
$M_{\infty}(r,f') \leq C(1-r)^{-\epsilon-1/p-\alpha/p-1/p} = 
C(1-r)^{-\epsilon -2/p -\alpha/p}$.  
But $-2/p-\alpha/p > -1$ if $\alpha < p-2$, 
so we have that $f$ is H\"{o}lder continuous 
in the disc by Lemma \ref{lemma:means-holder}
The H\"{o}lder exponent is $1-2/p-\alpha/p-\epsilon$.  
The constant is bounded above by 
\[
\begin{split}
2(p-1)^{-1/2}(B/2)^{1/2} &\cdot 192\left(1-\frac{2}{p}\right)^{-1} 
  \cdot 2\left(\frac{\Gamma(q-1)}{\Gamma(q/2)^2}\right)^{1/q} 
\\
&\cdot \left(1 - \frac{2}{1-2/p-\alpha/p-\epsilon}\right).
\end{split}
\]
\end{proof}

The lemmas that follow are used in the proof of the above theorem. 
Both of them are due to Hardy and Littlewood. 
\begin{lemma}[see \cite{D_Hp}, Theorem 5.1]\label{lemma:means-holder}
If $|f'(re^{i\theta})| \le C(1-r)^{1-\beta}$ for all sufficiently large 
$r$ then $f$ is continuous in the closed unit disc and 
\[
|f(e^{i\phi}) - f(e^{i\theta})| \leq 
  \left(1+\frac{2}{\beta}\right) C |\phi - \theta|^{\beta}.
\]
\end{lemma}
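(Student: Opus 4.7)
My plan is to follow the classical Hardy--Littlewood argument, with an explicit choice of parameters that gives the claimed constant $(1+2/\beta)C$. I read the hypothesis as $|f'(re^{i\theta})| \le C(1-r)^{\beta-1}$ with $0<\beta<1$ (which is the standard form, consistent with the way the lemma is invoked in the proof of Corollary \ref{thm:weighted_continuous}).

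First I would establish that the radial limits $f(e^{i\theta}) := \lim_{r\to 1^-} f(re^{i\theta})$ exist. Since $\beta>0$, the bound $|f'(re^{i\theta})| \le C(1-r)^{\beta-1}$ is integrable in $r$ near $1$, so for each $\theta$ and for $0 \le r < s < 1$ sufficiently large
\[
|f(se^{i\theta}) - f(re^{i\theta})| \le \int_r^s |f'(te^{i\theta})|\,dt \le \frac{C}{\beta}\bigl((1-r)^\beta - (1-s)^\beta\bigr),
\]
so $f(re^{i\theta})$ is Cauchy as $r \to 1^-$, uniformly in $\theta$, and in particular $|f(e^{i\theta}) - f(re^{i\theta})| \le (C/\beta)(1-r)^\beta$.

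Next, for the Hölder estimate on the circle, fix $\theta,\phi$ and set $h = |\phi-\theta|$, which I may assume is small. Choose the radius $r = 1-h$, and split
\[
f(e^{i\phi}) - f(e^{i\theta}) = \bigl[f(e^{i\phi}) - f(re^{i\phi})\bigr] + \bigl[f(re^{i\phi}) - f(re^{i\theta})\bigr] + \bigl[f(re^{i\theta}) - f(e^{i\theta})\bigr].
\]
The two endpoint terms are each bounded by $(C/\beta)(1-r)^\beta = (C/\beta)h^\beta$ by the radial estimate above. For the arc term, I parametrize the circle of radius $r$ and use
\[
|f(re^{i\phi}) - f(re^{i\theta})| \le \int_\theta^\phi |ire^{it} f'(re^{it})|\,dt \le C r (1-r)^{\beta-1} h \le C h^\beta.
\]
Adding the three contributions gives $|f(e^{i\phi}) - f(e^{i\theta})| \le (1 + 2/\beta) C h^\beta$, which is the desired bound.

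Finally, continuity on the closed disc follows by combining the uniform radial convergence $|f(e^{i\theta})-f(re^{i\theta})| \le (C/\beta)(1-r)^\beta$ with the Hölder estimate just proved: given $\zeta_n = r_n e^{i\theta_n} \to e^{i\theta}$, write $|f(\zeta_n) - f(e^{i\theta})| \le |f(r_n e^{i\theta_n}) - f(e^{i\theta_n})| + |f(e^{i\theta_n}) - f(e^{i\theta})|$ and send $n\to\infty$. There is no real obstacle here; the only point requiring a bit of care is the choice $r=1-h$, which is what couples the two types of estimates so that the radial part and the arc part both scale as $h^\beta$ and the constants add to $1 + 2/\beta$.
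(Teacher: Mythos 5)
Your proof is correct and is precisely the classical Hardy--Littlewood argument of Theorem 5.1 in Duren's book, which is what the paper cites in place of a proof; the radial/arc decomposition at $r = 1-|\phi-\theta|$ reproduces the stated constant $1+2/\beta$ exactly. Your reading of the hypothesis as $|f'(re^{i\theta})| \le C(1-r)^{\beta-1}$ (correcting the sign of the exponent in the statement) is also the intended one, as the application in Corollary \ref{thm:weighted_continuous} confirms.
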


The constant in the next lemma follows from applying 
H\"{o}lder's inequality to the Cauchy Integral formula, and using 
Lemma 
\ref{lemma:1minusrep} to see that 
\[
\begin{split}
1/(2\pi) \int_0^{2\pi} |\rho e^{it}-r|^{-p} \, dt 
& \leq
\rho^{-p} \Gamma(p-1)/\Gamma(p/2)^2 (1-(r/\rho))^{1-p} \\
&\leq 
2^{1-p} (1+\epsilon)
(\Gamma(p-1)/\Gamma(p/2)^2)(1-r)^{1-p}.
\end{split}
\]
 for large enough $r$. 
(See the proof in \cite{D_Hp}).

\begin{lemma}[see \cite{D_Hp}, Theorem 5.9]\label{lemma:comparative-means}
Let $1 < p < \infty$. 
If for sufficiently large $r$ we have
$M_p(r,f) \le K(1-r)^{-a}$ then given any $\epsilon > 0$ there is 
an $R$ such that for $R < r < 1$ we have 
$M_\infty(r,f) \le CK (1-r)^{-(a+1/p)}$ where 
$C = \left(2(1+\epsilon)\Gamma(p'-1)/\Gamma(p'/2)^2\right)^{1/p'}$. 
\end{lemma}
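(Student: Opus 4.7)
The plan is to follow the approach already sketched in the paragraph preceding the lemma statement, namely to combine the Cauchy integral formula with H\"older's inequality and the kernel estimate from Lemma \ref{lemma:1minusrep}. Fix $\theta$ and $r$, and choose an auxiliary radius $\rho$ with $r < \rho < 1$ to be specified; my intent is to take $\rho$ of the form $\rho = 1 - \lambda (1-r)$ with $\lambda \in (0,1)$ close to $1$, so that $(1-\rho)$ stays asymptotically comparable to $(1-r)$ with the comparison constant tending to $1$ as $\lambda \to 1$. Then the Cauchy formula gives
\[
f(re^{i\theta}) = \frac{1}{2\pi}\int_0^{2\pi} \frac{f(\rho e^{it})\,\rho e^{it}}{\rho e^{it} - re^{i\theta}}\, dt,
\]
and H\"older's inequality with exponents $p$ and $p'$ yields
\[
|f(re^{i\theta})| \le \rho\, M_p(\rho,f)\left(\frac{1}{2\pi}\int_0^{2\pi}\frac{dt}{|\rho e^{it} - re^{i\theta}|^{p'}}\right)^{1/p'}.
\]

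After a rotation in the $t$-variable (which makes the estimate independent of $\theta$) and pulling $\rho$ out of the kernel, the remaining integral is $\rho^{-p'}\, (2\pi)^{-1}\int_0^{2\pi} |e^{it} - r/\rho|^{-p'} dt$, which Lemma \ref{lemma:1minusrep} bounds by $\rho^{-p'}(\Gamma(p'-1)/\Gamma(p'/2)^2)(1-(r/\rho)^2)^{1-p'}$. Since $1-(r/\rho)^2 = (1-r/\rho)(1+r/\rho)$ and $1-r/\rho = (\rho-r)/\rho$, for $\rho = 1-\lambda(1-r)$ this quantity equals $(1-\lambda)(1-r)(1 + r/\rho)/\rho$, which is asymptotically $2(1-\lambda)(1-r)$ as $r \to 1^-$.

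Putting the pieces together, using the hypothesis $M_p(\rho,f) \le K(1-\rho)^{-a} = K\lambda^{-a}(1-r)^{-a}$ and the kernel estimate, gives
\[
M_\infty(r,f) \le K\lambda^{-a}(1-r)^{-a}\left(\frac{\Gamma(p'-1)}{\Gamma(p'/2)^2}\right)^{1/p'}\bigl(2(1-\lambda)(1-r)(1+o(1))\bigr)^{-1/p},
\]
valid for $r$ sufficiently close to $1$. Choosing $\lambda$ close to $1$, the factor $\lambda^{-a}(1-\lambda)^{-1/p}$ can be absorbed into $(2(1+\epsilon))^{1/p'}$ for any prescribed $\epsilon > 0$, since $(1-\lambda)^{-1/p} \cdot 2^{-1/p} = (2(1-\lambda))^{-1/p}$ and $2^{1/p'} \cdot 2^{-1/p}$ combines to give the stated constant form.

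The main technical obstacle is the bookkeeping of constants: one must balance the competing losses arising from (i) replacing $M_p(\rho,f)$'s bound $(1-\rho)^{-a}$ by a multiple of $(1-r)^{-a}$, which pushes $\lambda$ toward $1$, and (ii) the blow-up of the kernel integral proportional to $(1-\lambda)^{-1/p}$, which pushes $\lambda$ away from $1$. Verifying that both contributions, along with the asymptotic factor $(1 + r/\rho)/\rho \to 2$, can be packaged into the single $(1+\epsilon)$ factor in $C = (2(1+\epsilon)\Gamma(p'-1)/\Gamma(p'/2)^2)^{1/p'}$ requires writing the inequality with explicit dependence on $\lambda$ and then letting $\lambda \to 1^-$ (or equivalently $r \to 1^-$) to extract the leading-order constants.
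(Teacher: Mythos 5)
Your overall strategy---Cauchy's formula on an intermediate circle of radius $\rho\in(r,1)$, H\"older's inequality with exponents $p$ and $p'$, and Lemma \ref{lemma:1minusrep} applied to the kernel---is exactly the route the paper indicates in the paragraph preceding the lemma (the paper otherwise defers to Duren), and your setup through
\[
M_\infty(r,f) \le \rho\, M_p(\rho,f)\,\Bigl(\rho^{-p'}\tfrac{\Gamma(p'-1)}{\Gamma(p'/2)^2}\bigl(1-(r/\rho)^2\bigr)^{1-p'}\Bigr)^{1/p'}
\]
is correct. The gap is in the final constant bookkeeping. With $\rho=1-\lambda(1-r)$ you obtain, up to factors tending to $1$ as $r\to 1^-$, the constant $\lambda^{-a}\,\bigl(2(1-\lambda)\bigr)^{-1/p}\,\bigl(\Gamma(p'-1)/\Gamma(p'/2)^2\bigr)^{1/p'}$, and matching the stated $C=\bigl(2(1+\epsilon)\Gamma(p'-1)/\Gamma(p'/2)^2\bigr)^{1/p'}$ requires $\lambda^{-a}(1-\lambda)^{-1/p}\le 2(1+\epsilon)^{1/p'}$ (since $2^{1/p'}=2\cdot 2^{-1/p}$). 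Your instruction to ``let $\lambda\to 1^-$'' goes the wrong way: as $\lambda\to 1^-$ the radius $\rho$ approaches $r$, the kernel integral diverges, and $(1-\lambda)^{-1/p}\to\infty$. Nor does any other choice of $\lambda$ rescue the claim in general: the minimum of $\lambda^{-a}(1-\lambda)^{-1/p}$ over $\lambda\in(0,1)$ is attained at $\lambda=ap/(1+ap)$ and equals $\bigl((1+ap)/(ap)\bigr)^a(1+ap)^{1/p}$, which already exceeds $2$ for $a=1$, $p=2$ (about $2.6$) and grows without bound in $a$. So the two competing losses you correctly identify cannot both be ``packaged into the single $(1+\epsilon)$''; this is not a mere limit computation left to the reader. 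What your argument actually proves is the correct rate $(1-r)^{-(a+1/p)}$ with a constant that depends also on $a$.

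To be fair, the paper's own displayed computation before the lemma carries the same unresolved tension: its bound on the kernel integral forces $1-\rho$ to be a small multiple of $1-r$, and the resulting loss in passing from $(1-\rho)^{-a}$ to $(1-r)^{-a}$ is not visibly absorbed into the stated $C$. A complete write-up must either make the $a$-dependence of the constant explicit or supply a different mechanism for controlling $M_p(\rho,f)$; as written, your final absorption step is the missing (and false) link.
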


%
%
%
%
%

%\bibliography{extremal}

\providecommand{\bysame}{\leavevmode\hbox to3em{\hrulefill}\thinspace}
\providecommand{\MR}{\relax\ifhmode\unskip\space\fi MR }
% \MRhref is called by the amsart/book/proc definition of \MR.
\providecommand{\MRhref}[2]{%
  \href{http://www.ams.org/mathscinet-getitem?mr=#1}{#2}
}
\providecommand{\href}[2]{#2}

\end{document}